\DeclareFontFamily{U}{matha}{\hyphenchar\font45}
\DeclareFontShape{U}{matha}{m}{n}{
	<5> <6> <7> <8> <9> <10> gen * matha
	<10.95> matha10 <12> <14.4> <17.28> <20.74> <24.88> matha12
}{}
\DeclareSymbolFont{matha}{U}{matha}{m}{n}
\DeclareMathSymbol{\Lt}{3}{matha}{"CE}
\DeclareMathSymbol{\Gt}{3}{matha}{"CF}
\DeclareSymbolFont{mathc}{OML}{txmi}{m}{it}
\DeclareMathSymbol{\varuu}{\mathord}{mathc}{117}
\DeclareMathSymbol{\varvv}{\mathord}{mathc}{118}
\DeclareMathSymbol{\varww}{\mathord}{mathc}{119}
\def\valpha{\text{\scalebox{0.84}[1.02]{$\alpha$}}}   
\def\vepsilon{\upvarepsilon}
\def\vnu{\text{{\scalebox{0.9}[1]{$\nu$}}}}
\newcommand{\BR}{{\mathbf {R}}} 
\newcommand{\BZ}{{\mathbf {Z}}}
\newcommand{\GL}{{\mathrm {GL}}}
\newcommand{\SL}{{\mathrm {SL}}}
\newcommand{\ra}{\rightarrow} 
\def\sumx{\sideset{}{^\star}\sum}
\def\nd{\mathrm{d}}
\def\shskip{\hspace{0.5pt}}
\newcommand{\delete}[1]{}
\theoremstyle{plain}
\newtheorem{lem}{Lemma}[section]
\newtheorem{theorem}{Theorem}[section]
\newtheorem*{thm*}{Theorem}
\theoremstyle{remark} 
\newtheorem{remark}{Remark}[section] 
\newtheorem{defn}{Definition}[section]
\newtheorem*{acknowledgement}{Acknowledgements}
\numberwithin{equation}{section}
\begin{document}
	
		\title[$\mathrm{GL}_4 \! \times \! \mathrm{GL}_2$\,$L$-functions at Special Points]{{The Second Moment of  $\mathrm{GL}_4 \times \mathrm{GL}_2$   $L$-functions at Special Points}}

	\begin{abstract}
		In this paper, we provide an alternative proof of Chandee and Li's result on  the second moment of $ \mathrm{GL}_4 \times \mathrm{GL}_2$ special $L$-values. Our method is conceptually more direct as it neither detects the `Eisenstein--Kloosterman' cancellation nor uses the Poisson summation formula. 
	\end{abstract}
	
	\author[Z. Qi  and R. Qiao]{Zhi Qi and Ruihua Qiao}
	\address{School of Mathematical Sciences\\ Zhejiang University\\Hangzhou, 310027\\China}
	\email{zhi.qi@zju.edu.cn, ruihua.qiao@zju.edu.cn}
	
	\thanks{The first author was supported by National Key R\&D Program of China No. 2022YFA1005300.}

	\subjclass[2020]{11M41, 11F72}
	\keywords{Rankin--Selberg $L$-functions, large sieve, Vorono\"i summation formula. }
	
	\maketitle

	\section{Introduction}

	Let  $u_j $ traverse an orthonormal basis of  Hecke--Maass  forms for  $\mathrm{SL}_2 (\BZ) $. Let $s_j (1-s_j) = 1/4+t_j^2$, with  $s_j = 1/2+ i t_j$ ($t_j > 0$),  be the Laplace eigenvalue of $u_j $. It is proven by  Chandee and Li \cite{Chandee-Li-GL(4)-Special-Points} that
	\begin{align}\label{1eq: mean Lindelof, T}
		\sum_{t_j \leqslant T}    |L (s_j, \phi  \times u_j)  |^2 \Lt_{\phi, \vepsilon}   T^{2 +\vepsilon},
	\end{align}  
for any fixed  Hecke--Maass cusp form $\phi$ for $\SL_4 (\BZ)$.  Previously, \eqref{1eq: mean Lindelof, T} was proven for $\phi$ a fixed $ \SL_3 (\BZ)$ cusp form by Young \cite{Young-GL(3)-Special-Points}, who used a refined asymptotic large sieve in the spirit of Luo, Iwaniec, and Li \cite{Luo-Twisted-LS,Iwaniec-Li-Ortho}. 

The key point of Luo's large sieve is the `Eisenstein--Kloosterman' cancellation detected by the  Euler--Maclaurin formula, while Young's refinement of this is by the Poisson summation formula: part of the zero frequency is canceled with part of the Eisenstein contribution. 
The refined large sieve of Young is a main tool of Chandee and Li. Moreover, a major step in their work is an application of Poisson summation. 

Therefore Poisson summation has been applied twice in total by Young, Chandee, and Li!  However, since Poisson is an involution,  two applications of Poisson should become futility, if  the cancellation in the zero frequency were disregarded.   

Let us also observe that  the `Eisenstein--Kloosterman' cancellation does not really play a role in the problem, since the Eisenstein contribution is already $O (  T^{2+\vepsilon})$: 
\begin{align*}
	|L (1/2 , \phi) |^2 \cdot	\int_{- T}^{T }    |L (1/2+2it , \phi)   |^2  \nd t \Lt_{\phi, \vepsilon}     T^{2 +\vepsilon},
\end{align*}  
as a result of the large sieve for Dirichlet polynomials \cite[Theorem 6.1]{Montgomery-Topics} (with $N $ up to $ T^{2+\vepsilon}$),  
\begin{align*}
	\int_{-T}^T \bigg|\sum_{ n \leqslant   N} a_n n^{it} \bigg|^2 \nd t \Lt (T+N) \sum_{ n \leqslant   N}  |a_n|^2, 
\end{align*} 
so its part canceled out should not exceed $ T^{2+\vepsilon} $ any way.

The purpose of this paper is to provide a straightforward `Poisson-free' proof of \eqref{1eq: mean Lindelof, T} by the  twisted large sieve on short intervals established in \cite{Qi-GL(3)-Special-Points} (recorded below in Theorem \ref{thm: LS GL(2)}).  It was used to improve Young's $\mathrm{GL}_3 \times \GL_2$ result into
\begin{align}\label{1eq: mean Lindelof, GL(3)}
	\sum_{T < t_j \leqslant T + {T}^{1/2} }    |L (s_j, \phi  \times u_j)  |^2 \Lt_{\phi, \vepsilon}   T^{3/2 +\vepsilon} . 
\end{align}     
Our theorem is a `short-interval' equivalence of \eqref{1eq: mean Lindelof, T} as follows. 

\begin{theorem} \label{thm: main}
	Let $ \phi $ be a fixed  Hecke--Maass cusp form for $\SL_4 (\BZ)$. Then
	 \begin{align}\label{1eq: bound GL(4)}
	 	\sum_{T < t_j \leqslant T + {T}^{1 - \vepsilon}  }    |L (s_j, \phi  \times u_j)  |^2 \Lt_{\phi, \vepsilon}   T^{2 +\vepsilon} ,
	 \end{align}
 for any $\vepsilon > 0$, where the implied constant depends only  on $\phi$ and $\vepsilon$.  
\end{theorem}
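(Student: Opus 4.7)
My plan parallels the $\GL(3) \times \GL(2)$ treatment of \cite{Qi-GL(3)-Special-Points}, with $\GL(4)$ Voronoi in place of $\GL(3)$ Voronoi. I would first apply an approximate functional equation to $L(s_j, \phi \times u_j)$: since the analytic conductor at $s_j = 1/2 + i t_j$ is $\asymp t_j^{4}$, this produces a Dirichlet polynomial of length $\asymp t_j^{2 + \vepsilon}$ in the degree-eight Rankin--Selberg coefficients, which unfold as convolutions of the $\GL(4)$ Hecke coefficients $A_\phi$ of $\phi$ with the Hecke eigenvalues $\lambda_{u_j}$. After squaring $|L(s_j, \phi \times u_j)|^{2}$ and attaching a smooth weight $h$ concentrated on $[T, T + T^{1 - \vepsilon}]$, the quantity to be bounded reduces to a sum of spectral expressions of the shape
\begin{equation*}
	\sum_{j} h(t_j)\, (n/m)^{i t_j}\, \lambda_{u_j}(b)\, \overline{\lambda_{u_j}(b')},
\end{equation*}
with products of $A_\phi$'s carried on the outside.

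Next I would apply the Kuznetsov trace formula to this inner spectral sum. The diagonal $b = b'$, together with oscillation of the factor $(n/m)^{i t_j}$ (which confines the support essentially to $n \asymp m$), contributes $\Lt T^{2 + \vepsilon}$ by the short-interval spectral mass $\asymp T^{2 - \vepsilon}$ combined with the Rankin--Selberg bound $\sum_{n \leqslant N} |A_\phi(1, 1, n)|^{2} \Lt_\phi N^{1 + \vepsilon}$. For the Kloosterman term, I would apply $\GL(4)$ Voronoi summation to the $A_\phi$-sum, which dualizes a length-$T^{2 + \vepsilon}$ sum with modulus $c$ into one of length $\asymp c^{4} / T^{2 + \vepsilon}$ in the dual Hecke coefficients of $\widetilde\phi$. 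Stationary-phase analysis on the composite Kuznetsov--Voronoi Bessel kernels in the short-interval regime should pin $c \asymp T^{1 + \vepsilon}$, so the post-Voronoi length is again $\asymp T^{2 + \vepsilon}$.

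What remains is precisely a short-interval sum
\begin{equation*}
	\sum_{j} h(t_j)\, \Bigl|\sum_{n \ll T^{2 + \vepsilon}} \alpha_n\, \lambda_{u_j}(n)\, n^{i t_j}\Bigr|^{2},
\end{equation*}
with $\|\alpha\|_{2}^{2} \Lt T^{\vepsilon}$ by partial summation from the Rankin--Selberg bound on the dual $\GL(4)$ coefficients, to which Theorem~\ref{thm: LS GL(2)} applies directly, giving $\Lt (T \cdot T^{1 - \vepsilon} + T^{2 + \vepsilon}) \cdot T^{\vepsilon} = T^{2 + \vepsilon}$. The main obstacle, I expect, will be the stationary-phase analysis coupling the Kuznetsov and $\GL(4)$ Voronoi Bessel kernels in the short-interval regime: one must confirm that the Kloosterman modulus $c$ genuinely localizes to a single dyadic range $c \asymp T^{1 + \vepsilon}$, and that the resulting archimedean weights separate cleanly into large-sieve coefficients $\alpha_n$ without inflating $\|\alpha\|_{2}$ beyond $T^{\vepsilon}$. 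A secondary difficulty is the arithmetic bookkeeping that converts the Kloosterman sums and Gauss-sum residues produced by $\GL(4)$ Voronoi into a form absorbable by Theorem~\ref{thm: LS GL(2)}.
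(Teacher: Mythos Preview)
Your ordering of the main tools is inverted, and this creates a genuine gap. Once you apply Kuznetsov to the spectral sum $\sum_j h(t_j)\,(n/m)^{it_j}\lambda_{u_j}(b)\overline{\lambda_{u_j}(b')}$, you are on the arithmetic side: the off-diagonal is a sum over moduli $c$ of Kloosterman sums weighted by Bessel integrals, with no spectral variable left. Applying $\GL(4)$ Voronoi to the outer $A_\phi$-sums then transforms one arithmetic object into another. The claim that ``what remains is precisely a short-interval sum $\sum_j h(t_j)\,\big|\sum_n \alpha_n \lambda_{u_j}(n) n^{it_j}\big|^2$'' is therefore incorrect --- no such spectral sum re-emerges, and Theorem~\ref{thm: LS GL(2)} (a \emph{spectral} large sieve) cannot be invoked at that stage. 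Note also that the final estimate you quote, $(T\cdot T^{1-\vepsilon} + T^{2+\vepsilon})\,T^\vepsilon$, is the shape of the standard Deshouillers--Iwaniec spectral large sieve, not of Theorem~\ref{thm: LS GL(2)}: the latter's output carries a third piece $\breve P(\mathcal A)$ that itself demands substantial further work and is in fact where the entire difficulty lies.

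The paper (and the $\GL(3)$ argument in \cite{Qi-GL(3)-Special-Points} you say you are paralleling) runs the steps in the opposite order. Theorem~\ref{thm: LS GL(2)} is applied \emph{first}, directly to the AFE output
\[
S(n_2;N)=\sum_{T<t_j\leqslant T+M}\bigg|\frac{1}{\sqrt N}\sum_n A(1,n_2,n)\lambda_j(n)n^{-it_j}\,\varww\Big(\frac nN\Big)\bigg|^2,
\]
so that Kuznetsov is absorbed once, inside the black box. This produces a diagonal $\breve D$, a negligible $\breve E$, and a principal term $\breve P$ consisting of sums over $q$, $c$, $a\pmod c$ of $\big|\sum_n A(1,n_2,n)\,e(\bar a n/c)\,e(nt/cq)\,\varww(n/N)\big|^2$. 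Only now is $\GL(4)$ Voronoi applied, to the inner $n$-sum inside $\breve P$. After collapsing the $\mathrm{Kl}_2$-structure via orthogonality in $a$ and a stationary-phase analysis of the Hankel transform, the dual sums are bounded not by Theorem~\ref{thm: LS GL(2)} again but by the \emph{classical} large sieve (Lemma~\ref{lem: LS}) in the non-oscillatory range and by Young's \emph{hybrid} large sieve (Lemma~\ref{lem: Young's LS}) in the oscillatory range. A side benefit of this ordering is that the square is never opened on the spectral side, so the bilinear $b,b'$ bookkeeping you anticipate is avoided entirely.
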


Note that $(T, 2 T]$ splits into $O (T^{\vepsilon})$ many intervals of the form $(T_1, T_1 + T_1^{1-\vepsilon}]$, and hence \eqref{1eq: mean Lindelof, T} is indeed deducible from \eqref{1eq: bound GL(4)} by the $\vepsilon$-convention. As explained in Remark \ref{rem: short}, it is a technical reason that the interval length be (at most) $T^{1-\vepsilon}$ for the application of Theorem \ref{thm: LS GL(2)}.

\subsection*{Comparison with the work of Chandee and Li} 

To be explicit, the reader should first examine  the similarities between our \eqref{5eq: P(n; N)}, \eqref{5eq: P(N), 0} with (6.1) in \cite{Chandee-Li-GL(4)-Special-Points}; after the approximate functional equation, the former is obtained directly  in a `Poisson-free' way from  Theorem \ref{thm: LS GL(2)}, while the latter is obtained from the twisted spectral large sieve of Young \cite{Young-GL(3)-Special-Points}  and the application of Poisson summation. 

The bulk of analysis comes after the application of  $\GL_4$ Vorono\"i and the simplification of exponential sums. Let us outline the similarities  and differences between the analysis in \S \S \ref{sec: Hankel, 1}--\ref{sec: large sieve, 2} of this work and \S \S 8, 9 of \cite{Chandee-Li-GL(4)-Special-Points}. There are two cases, depending on whether the argument of the Bessel kernel in the Hankel integral transform is small or large. For the first case as in \S \S \ref{sec: Hankel, 1}, \ref{sec: large sieve, 1} and \S 8 of  \cite{Chandee-Li-GL(4)-Special-Points} (this case may be avoided in the $\GL_3$ setting in \cite{Qi-GL(3)-Special-Points}),  both proofs apply the Mellin technique and the classical large sieve, but the analysis here is in a less explicit form due to the notion of inert functions.  For the second case as in  \S \S \ref{sec: Hankel II}, \ref{sec: large sieve, 2} and \S 9 of  \cite{Chandee-Li-GL(4)-Special-Points}, 
 our proof is considerably shorter than theirs thanks to 
the hybrid large sieve of Young \cite{Young-GL(3)-Special-Points}  so that we can avoid opening the square (the referee pointed out to us that this alternative was mentioned at the end of \S 4 of \cite{Chandee-Li-GL(4)-Special-Points}).

Moreover, it might be of its own interest that an individual bound for $\GL_4$ Fourier coefficients is established in Lemma \ref{lem: bound for A}  to  improve slightly the averaged Ramanujan bounds in \cite{Chandee-Li-GL(4)-Special-Points} by a simple argument of Blomer \cite{Blomer}.

\subsection*{Notation}

By $X \Lt Y$ or $X = O (Y)$ we mean that $|X| \leqslant c Y$  for some constant $c  > 0$, and by $X \asymp Y$ we mean that $X \Lt Y$ and $Y \Lt X$. We write $X \Lt_{\valpha, \beta, ...} Y $ or $  X = O_{\valpha, \beta, ...} (Y) $ if the implied constant $c$ depends on $\valpha$, $\beta$, ....  

The notation $x \sim X$ stands for  $ X <  x \leqslant 2 X $ for $x$ integral or real according to the context.  

By `negligibly small' we mean $ O_A ( T^{-A} )$ for arbitrarily large but fixed $A > 0$. 

Throughout the paper,  $\vepsilon  $ is arbitrarily small and its value  may differ from one occurrence to another. 

\begin{acknowledgement}
	We gratefully acknowledge the referee for many suggestions that significantly improved the clarify of our exposition. 
\end{acknowledgement}

\section{The Notion of Inert Functions}

According to \cite{KPY-Stationary-Phase}, let us introduce the notion of inert functions. 

\begin{defn}\label{defn: inert}
	Let $\boldsymbol{I}  \subset \BR_+^{d}$ be a product of intervals {\rm(}not necessarily bounded{\rm)}.  For $X \geqslant 1$, we say a smooth function $\varww \in C^{\infty} (\boldsymbol{I})$ {\rm(}not necessarily compactly supported{\rm)} is $X$-inert if 
	\begin{align*}
		\boldsymbol{x}^{\boldsymbol{j}} 	\varww^{(\boldsymbol{j})} (\boldsymbol{x})  \Lt_{\boldsymbol{j}} X^{|\boldsymbol{j}|}  , \qquad \text{($\boldsymbol{x} \in \boldsymbol{I}$),} 
	\end{align*} for every $\boldsymbol{j} \in \mathbf{N}_0^{d}$, where in the multi-variable notation $\boldsymbol{x}^{\boldsymbol{j}} = x_1^{j_1} \cdots x_d^{j_d}$, $ \varww^{(\boldsymbol{j})} (\boldsymbol{x}) = \varww^{(j_1, \cdots, j_d)} (x_1, \cdots, x_d) $, and $|\boldsymbol{j}| = j_1+ \cdots + j_{d}$. 
\end{defn}

For $\boldsymbol{I}$ bounded open and $ \varww \in C_c^{\infty} (\boldsymbol{I}) $ compactly supported,  let us define the Mellin transform $ \widetilde{\varww} (\boldsymbol{s}) $ by 
	\begin{align*}
		\widetilde{\varww} (\boldsymbol{s}) = \int_{ \boldsymbol{I} } \varww (\boldsymbol{x}) \boldsymbol{x}^{\boldsymbol{s}   }  \nd^{\times} \boldsymbol{x}, 
	\end{align*}
where as usual $ \nd^{\times} x = \nd x / x $ and $ \nd^{\times} \boldsymbol{x} = \nd^{\times} x_1  \cdots \nd^{\times} x_d $. 
	We have the Mellin inversion formula
	\begin{align*}
		\varww (\boldsymbol{x}) = \frac 1 {(2\pi)^d } \int_{ \BR^{d} } \widetilde{\varww} (i \boldsymbol{r})  \boldsymbol{x}^{- i \boldsymbol{r}} \nd \boldsymbol{r}. 
	\end{align*}	In this paper, we shall  have practically $X = \log T$ or $T^{\vepsilon}$, and 
\begin{align*}
	\int_{ \boldsymbol{I} }    { \nd^{\times} \boldsymbol{x}} = O (\log^d T), 
\end{align*}
so  it is routine to prove that $\widetilde{\varww} (i \boldsymbol{r})$ is trivially $O (\log T)$ and negligibly small except when each component of $\boldsymbol{r}$ is bounded by $T^{\vepsilon}$. Accordingly, define 
\begin{align*}
	\boldsymbol{I}^d (X) = \big\{ \boldsymbol{r} \in \BR^d : |r_1|, \cdots, |r_d| \leqslant X \big\},
\end{align*}
then
\begin{align}\label{2eq: Mellin}
	\varww (\boldsymbol{x}) = \frac 1 {(2\pi)^d } \int_{\boldsymbol{I}^d (T^{\vepsilon}  )}  \widetilde{\varww} (i \boldsymbol{r})  \boldsymbol{x}^{- i \boldsymbol{r}} \nd \boldsymbol{r} + O_A \big(T^{-A} \big). 
\end{align} 
This will be used {\rm(}together with Cauchy--Schwarz{\rm)} to separate variables prior to the application of large sieve. 

\begin{remark}\label{rem: Mellin} We remark that $ \varww \in C^{\infty} (\boldsymbol{I}) $ actually does not need to be compactly supported if we are concerned with $\varww (\boldsymbol{x})$ for $\boldsymbol{x}$ restricted to a certain closed $\boldsymbol{I}_{0} \subset \boldsymbol{I}$ (for example, $[1, T^{\theta}] \subset (1/2, 2T^{\theta})$) such that there is an inert cut-off function $\varvv  \in C_c^{\infty} (\boldsymbol{I}) $ with $ \varvv (\boldsymbol{x}) \equiv 1 $ on $\boldsymbol{I}_0$. Of course, \eqref{2eq: Mellin} needs to be modified with $ \widetilde{\varww} (i \boldsymbol{r}) $ replaced by $ \widetilde{\varww}_{0} (i \boldsymbol{r})  $ for $ \varww_0  = \varww \cdot \varvv  $, but we shall still write $ \widetilde{\varww} (i \boldsymbol{r}) $ with slight abuse of notation.  
\end{remark}

\section{A Stationary Phase Lemma}

Let us record here Lemma 2.5 from \cite{Qi-GL(3)-Special-Points}.

\begin{lem} \label{lem: analysis of integral}
	Let  $ \gamma > 1$. 
	For $ \sqrt {\lambda} \geqslant X \geqslant  1$ and $\rho > 0$, define 
	\begin{align*}
		I_{\gamma}^{\pm} (\lambda, \boldsymbol{x}) =   \int_{\rho}^{2\rho}  e \big(\lambda \big(x \pm \gamma   x^{1/\gamma} \big) \big) \varww (x, \lambda, \boldsymbol{x}) \nd x,  
	\end{align*} 
	for an $X$-inert function  $\varww (x, \lambda, \boldsymbol{x}) \in C^{\infty} ([\rho ,  2 \rho] \times [X^2, \infty) \times \boldsymbol{I} )$, with compact support in the first variable $x$. 
	
	{\rm\,(i)} We have
	$$ I_{\gamma}^{\pm} (\lambda, \boldsymbol{x}) \Lt_A  \rho \cdot \bigg( \frac {  X  } {\lambda (\rho +\rho^{1/\gamma})}\bigg)^A  $$ 
	for any value of $\rho$ in the $+$ case, or for $ \min \big\{ \rho/\sqrt{2}, \sqrt{2}/\rho \big\}   < 1 / 2 $ in the $-$ case.  
	
	{\rm(ii)} Define  
	\begin{align*}
		\varvv_{\gamma} (\lambda, \boldsymbol{x} ) =  e (  \lambda (\gamma -1) ) \cdot \sqrt{\lambda}   I_{\gamma}^{-} (\lambda, \boldsymbol{x}  ), 
	\end{align*}  
	then $\varvv_{\gamma} (\lambda, \boldsymbol{x} )$ is an $X$-inert function for any $1/2 \leqslant \rho/\sqrt{2} \leqslant 2 $. 
\end{lem}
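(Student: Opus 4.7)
The plan is to handle part (i) by non-stationary integration by parts and part (ii) by the inert-function stationary-phase method of Kıral--Petrow--Young. Writing the phase as $\lambda \Phi^{\pm}(x)$ with $\Phi^{\pm}(x) = x \pm \gamma x^{1/\gamma}$, one has $(\Phi^{\pm})'(x) = 1 \pm x^{1/\gamma - 1}$ and $(\Phi^{\pm})''(x) = \mp (1 - 1/\gamma)\, x^{1/\gamma - 2}$, so the $+$ phase is non-stationary on $(0,\infty)$, while the $-$ phase has the unique stationary point $x^{\ast} = 1$, with $\Phi^{-}(1) = 1 - \gamma$ and $(\Phi^{-})''(1) = 1 - 1/\gamma > 0$.

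For part (i), the main task is to verify the uniform lower bound $|\phi'(x)| \asymp \lambda(1 + \rho^{1/\gamma - 1})$, equivalently $\rho\,|\phi'(x)| \asymp \lambda(\rho + \rho^{1/\gamma})$, on the support of $\varww$ inside $[\rho, 2\rho]$. In the $+$ case this is immediate from $1 + x^{1/\gamma-1} \geqslant \max\{1,\,x^{1/\gamma-1}\}$; in the $-$ case the hypothesis $\min\{\rho/\sqrt{2},\,\sqrt{2}/\rho\} < 1/2$ keeps the support a definite multiplicative distance away from $x^{\ast} = 1$, so $|1 - x^{1/\gamma-1}|$ stays comparable to $1 + x^{1/\gamma-1}$ with constants depending only on $\gamma$. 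Granted this, repeated application of the operator $T\varww = (\varww/(2\pi i \phi'))'$ yields the claim: $X$-inertness of $\varww$ makes each differentiation cost at most $X/\rho$, and one checks that $\rho\,|\phi''|/(\phi')^2$ is bounded by $1/(\lambda(\rho + \rho^{1/\gamma}))$ as well, so each $T$ multiplies the uniform size by $X/(\lambda(\rho + \rho^{1/\gamma}))$. The trivial length bound $\rho$ then completes the argument after $A$ iterations.

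For part (ii), the restriction $\sqrt{2}/2 \leqslant \rho \leqslant 2\sqrt{2}$ places $x^{\ast} = 1$ inside or adjacent to $[\rho, 2\rho]$, and gives $|\phi''(1)| \asymp \lambda$. I would invoke the inert-function form of stationary phase from \cite{KPY-Stationary-Phase} (already used in \cite{Qi-GL(3)-Special-Points}), whose conclusion is that $\sqrt{|\phi''(x^{\ast})|}\,e(-\phi(x^{\ast}))\int e(\phi)\,\varww\,\nd x$ is itself $X$-inert in the external parameters---not merely well-approximated by an $X$-inert function---provided the phase and amplitude depend in an $X$-inert fashion on those parameters. Since $x^{\ast} = 1$ is independent of $(\lambda, \boldsymbol{x})$ and $\lambda\Phi^{-}(x)$ depends on $\lambda$ only linearly, the hypotheses are easy to verify, and substituting $\phi(x^{\ast}) = -\lambda(\gamma-1)$ and $|\phi''(x^{\ast})| = \lambda(1 - 1/\gamma)$ produces exactly $\varvv_{\gamma}(\lambda, \boldsymbol{x}) = e(\lambda(\gamma-1))\sqrt{\lambda}\,I_{\gamma}^{-}(\lambda, \boldsymbol{x})$ as an $X$-inert function.

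The main technical care is twofold. In the $-$ case of part (i) one must distill from $\min\{\rho/\sqrt{2},\,\sqrt{2}/\rho\} < 1/2$ a $\gamma$-uniform comparison $|1 - x^{1/\gamma - 1}| \asymp 1 + x^{1/\gamma - 1}$ on the support of $\varww$, and keep careful track of the combinatorics of iterating $T$ so that the constants really do compose into a single $X$-power. In part (ii) one must verify the joint $X$-inertness of the amplitude in $(x, \lambda, \boldsymbol{x})$ demanded by the stationary-phase lemma, noting that the standing hypothesis $\sqrt{\lambda} \geqslant X$ plays its usual role of forcing the Gaussian scale $1/\sqrt{\lambda}$ to be finer than the $X$-inert length scale of $\varww$, so that the localization at $x^{\ast} = 1$ is compatible with the inert-function framework.
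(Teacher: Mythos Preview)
The paper does not give its own proof of this lemma; it is simply quoted as Lemma~2.5 of \cite{Qi-GL(3)-Special-Points}. Your plan---iterated integration by parts for part~(i) and the inert stationary-phase lemma of \cite{KPY-Stationary-Phase} for part~(ii)---is the standard route and almost certainly what that reference does, so there is nothing substantive to compare.

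One warning on your execution of (i) in the $-$ case. You assert that $\min\{\rho/\sqrt{2},\,\sqrt{2}/\rho\} < 1/2$ keeps $[\rho, 2\rho]$ a definite multiplicative distance from $x^{\ast}=1$, but this is not literally true: for instance $\rho = 3/5$ satisfies $\rho/\sqrt{2} < 1/2$ while $1 \in [3/5, 6/5]$, and then $(\Phi^{-})'$ vanishes in the interior, so no amount of integration by parts will beat the stationary-phase contribution of order $\lambda^{-1/2}$. The natural reading is that in the source the dyadic window is symmetric about $\rho$ on the logarithmic scale (that is, $[\rho/\sqrt{2}, \rho\sqrt{2}]$ rather than $[\rho, 2\rho]$), in which case the stated condition is exactly the right one and your argument goes through verbatim. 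Your method is correct for the intended statement; just be aware that the separation claim you rely on fails for the interval as transcribed here.
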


\section{The Large Sieve Inequalities}

\subsection{The Classical Large Sieve}  

First we have the classical large sieve inequality (see \cite[(3.12)]{Montgomery-Topics}).
\begin{lem} \label{lem: LS} Let $C, N > 0$. Then 
	\begin{align*}
	 \sum_{c \shskip \leqslant C} \,  \sumx_{   a            (\mathrm{mod} \, c) }   \bigg| \sum_{M < n \leqslant M+N}  a_{n}   e \Big(   \frac {a           n} {c} \Big)  \bigg|^2 \Lt \big(C^2  + N   \big) \sum_{M < n \leqslant M+N}  |a_{n}  |^2 , 
\end{align*} 
	for any complex $a_n$. 
\end{lem}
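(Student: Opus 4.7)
The plan is to reduce the statement to the general large sieve inequality for $\delta$-spaced points and then invoke Gallagher's Sobolev trick. I would first observe that the Farey fractions $a/c$ with $c \leqslant C$ and $(a,c) = 1$ are $1/C^2$-spaced modulo $1$: for distinct such fractions, $|a/c - a'/c'| = |a c' - a' c|/(c c') \geqslant 1/(c c') \geqslant 1/C^2$. Hence it suffices to establish
\begin{align*}
\sum_r \Big|\sum_{M < n \leqslant M+N} a_n e(n x_r)\Big|^2 \Lt (\delta^{-1} + N) \sum_n |a_n|^2
\end{align*}
for any real points $x_r$ with pairwise distance at least $\delta$ modulo $1$, and then take $\delta = 1/C^2$.

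Next, I would center the $n$-variable. Writing $n = n_0 + m$ with $n_0 = \lfloor M + N/2 \rfloor$ pulls out a unimodular factor $e(n_0 x_r)$ from the inner sum, so we may assume the summation runs over $|m| \leqslant N/2 + 1$. Setting $S(x) := \sum_m a_{n_0 + m} e(mx)$, Parseval gives $\int_0^1 |S|^2\, dt = \sum_m |a_{n_0 + m}|^2$ and $\int_0^1 |S'|^2\, dt = 4\pi^2 \sum_m m^2 |a_{n_0 + m}|^2 \leqslant \pi^2 (N+2)^2 \int_0^1 |S|^2\, dt$. I would then invoke Gallagher's Sobolev-type inequality: for any interval $J = [x_0 - \delta/2, x_0 + \delta/2]$,
\begin{align*}
|S(x_0)|^2 \leqslant \delta^{-1} \int_J |S(t)|^2\, dt + 2 \int_J |S'(t)|\, |S(t)|\, dt,
\end{align*}
which follows by averaging the identity $|S(x_0)|^2 = |S(t)|^2 + 2\,\mathrm{Re}\int_t^{x_0} S'(u)\, \overline{S(u)}\, du$ over $t \in J$.

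Finally, since the intervals $J_r$ around the $\delta$-spaced points $x_r$ are pairwise disjoint modulo $1$, summing Gallagher's inequality and applying Cauchy--Schwarz yields
\begin{align*}
\sum_r |S(x_r)|^2 \leqslant \delta^{-1} \int_0^1 |S|^2\, dt + 2 \Big(\int_0^1 |S|^2\, dt\Big)^{1/2} \Big(\int_0^1 |S'|^2\, dt\Big)^{1/2} \Lt (\delta^{-1} + N) \sum_m |a_{n_0 + m}|^2.
\end{align*}
Taking $\delta = 1/C^2$ yields the claim. The only nontrivial input is Gallagher's Sobolev inequality; the Farey spacing, the centering reduction, and the Parseval computation are all routine, and I do not expect any substantial obstacle.
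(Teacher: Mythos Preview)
Your argument is correct: the Farey spacing observation, Gallagher's Sobolev-type lemma, and the Parseval computation combine exactly as you describe to give the bound $(\delta^{-1}+N)\sum|a_n|^2$ with $\delta=1/C^2$. This is the standard Gallagher proof of the classical large sieve.

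The paper does not actually prove Lemma~\ref{lem: LS}; it simply records the inequality and cites Montgomery \cite[(3.12)]{Montgomery-Topics} as a reference. The proof you have written is essentially the one found in that reference (Gallagher's approach), so your route is not different in spirit---you have just supplied the details that the paper chose to omit by citation.
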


\subsection{The Hybrid Large Sieve of Young} 

Next, we record here \cite[Lemma 2.6]{Qi-GL(3)-Special-Points} which is a special case of the hybrid large sieve of Young \cite[Lemma 6.1]{Young-GL(3)-Special-Points}.  

\begin{lem}\label{lem: Young's LS} Let   $v, \tau, C, N  > 0$ and $\gamma \neq 0$ be real. Then
	\begin{align*}
		\begin{aligned}
			\int_{-\tau}^{\tau}   \sum_{c \shskip \leqslant C}  \frac 1 {c} \, \sumx_{   a            (\mathrm{mod} \, c) } \!  \bigg|  \!   \sum_{ n  \sim N}    a_{n}   e \Big(   \frac {a           n} {c} \Big)  e  \bigg(   \frac { n^{\gamma} t} {c v} \bigg)   \bigg|^2  \!   \nd t \Lt_{\gamma} \! \big(\tau C + v N^{1-\gamma} \log C \big) \!   \sum_{ n \sim N }    |a_n|^2,
		\end{aligned}
	\end{align*}
	for any complex $a_n$.  
\end{lem}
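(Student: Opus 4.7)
The plan is to prove the inequality by opening the square, performing the $t$-integral explicitly, and estimating the resulting bilinear form separately on the diagonal and off-diagonal. Expanding $|\cdot|^2$ and interchanging the orders of summation and integration produces
\begin{align*}
\mathrm{LHS} = \sum_{c \leqslant C}\frac{1}{c}\sumx_{a\, (\mathrm{mod}\,c)}\sum_{m, n \sim N} a_m \bar a_n\, e\bigg(\frac{a(m-n)}{c}\bigg) \cdot J_{m,n}(c),
\end{align*}
where $J_{m,n}(c) = \int_{-\tau}^{\tau} e((m^\gamma - n^\gamma)t/(cv))\,\nd t$ is bounded in modulus by $\min\{2\tau,\, cv/(\pi|m^\gamma - n^\gamma|)\}$. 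For $m, n \sim N$ with $m\neq n$, the mean-value theorem yields $|m^\gamma - n^\gamma| \asymp_\gamma |m-n|\, N^{\gamma - 1}$.

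On the diagonal $m = n$, one has $J_{n,n}(c) = 2\tau$ and the sum over primitive residues equals $\phi(c)$, so the diagonal contributes
\begin{align*}
2\tau \sum_{c \leqslant C}\frac{\phi(c)}{c}\sum_n|a_n|^2 \,\Lt\, \tau C \sum_n|a_n|^2,
\end{align*}
which accounts for the first $\tau C$ term in the claimed bound.

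For the off-diagonal $m \neq n$, the inner sum over primitive residues is the Ramanujan sum $c_c(m - n)$, satisfying $|c_c(h)|\leqslant (c,h)$. Setting $h = m - n$ and replacing $\mathbf{1}_{[-\tau,\tau]}(t)$ by a Schwartz majorant with compactly supported Fourier transform (as in Young's treatment), the $h$-sum is truncated to $|h|\Lt Cv/(\tau N^{\gamma - 1})$, and the oscillatory $t$-integral reduces to a smooth kernel of size $O(\tau)$. Inserting the identity $c_c(h) = \sum_{d\mid (c,h)} d\,\mu(c/d)$, exchanging the orders of the $c$- and $d$-sums, and changing variables $c = de$, $h = d\ell$, one isolates a short M\"obius-weighted inner sum over $e\leqslant C/d$, estimable by partial summation together with the elementary bound $\sum_{c \leqslant C}(c,h)/c \Lt d(|h|)\log C$. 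Combining this with the trivial convolution bound $|\sum_m a_m \bar a_{m-h}|\leqslant \sum_n |a_n|^2$, the off-diagonal contributes the desired $vN^{1-\gamma}\log C$ factor.

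The main obstacle is to extract the sharp $\log C$ factor (rather than a $(\log C)^{O(1)}$ or $C^\vepsilon$ loss) from the off-diagonal. This requires a careful dyadic matching between the scale of $|h|$ and the corresponding range of $c$, and depends crucially on the M\"obius cancellation in the Ramanujan sum to offset the lossy trivial bound on the shifted convolution $\sum_m a_m\bar a_{m-h}$. These are precisely the ingredients underlying Young's proof of \cite[Lemma 6.1]{Young-GL(3)-Special-Points}, of which our statement is the special case relevant to our application.
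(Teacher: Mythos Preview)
The paper does not prove this lemma; it is quoted from \cite[Lemma 2.6]{Qi-GL(3)-Special-Points} as a special case of Young's hybrid large sieve \cite[Lemma 6.1]{Young-GL(3)-Special-Points}, so there is no in-paper argument to compare against beyond the citation.

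Your sketch, however, has a genuine gap in the off-diagonal. After smoothing, the $c$-sum $\sum_{c\le C}c^{-1}c_c(h)\,\hat F\big(\tau\Delta/(cv)\big)$, treated via $c_c(h)=\sum_{d\mid(c,h)}d\,\mu(c/d)$ and partial summation in $e=c/d$, is only $O(\tau)$ times the number of divisors $d$ of $h$ with $d\le C$: the M\"obius cancellation in $\sum_{e\le C/d}\mu(e)/e$ yields $O(1)$ for each such $d$, not anything decaying in $d$. Summing over $|h|\Lt H:=CvN^{1-\gamma}/\tau$ with the trivial convolution bound $|\sum_m a_m\bar a_{m-h}|\le\|a\|^2$ then gives
\[
\tau\,\|a\|^2 \sum_{0<|h|\Lt H}\big|\{d\mid h:d\le C\}\big|\ \asymp\ \tau H\log C\,\|a\|^2\ =\ C\cdot vN^{1-\gamma}\log C\,\|a\|^2,
\]
a full factor of $C$ too large --- not merely the $C^{\vepsilon}$ or $(\log C)^{O(1)}$ loss your final paragraph anticipates. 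No amount of ``dyadic matching'' between $|h|$ and $c$ repairs this, because the bound $O(1)$ on the inner $e$-sum is already sharp.

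Young's actual argument is not a direct off-diagonal estimation. After the substitution $t\mapsto ct$ (making the analytic phase $e(n^\gamma t/v)$ independent of $c$), one applies Gallagher's lemma to the $t$-integral, reducing the problem to sums of $a_n e(an/c)$ over $n$ in short intervals of length $\asymp cvN^{1-\gamma}/\tau$, and then applies the classical large sieve (Lemma~\ref{lem: LS}) in $(a,c)$ over each dyadic block $c\sim C'$. This contributes $\Lt(\tau C'+vN^{1-\gamma})\|a\|^2$ per block, and summing over the $O(\log C)$ blocks gives the stated bound. It is precisely this interplay of Gallagher's short-interval reduction with the arithmetic large sieve that wins the factor of $C$; the Ramanujan-sum route cannot recover it.
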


\subsection{The Special Twisted Spectral Large Sieve for  {\protect\scalebox{1.06}{$\SL_2 (\BZ)$}}}

Our main tool is the following spectral large sieve  for $\SL_2 (\BZ)$ essentially from \cite[Theorem 2]{Qi-GL(3)-Special-Points}. It was derived from the Kuznetsov trace formula and a certain representation of the Bessel integral. 
For simplicity, let us reformulate it as follows.

\begin{theorem} \label{thm: LS GL(2)}
	Let $\vepsilon > 0$. Assume $ T^{\vepsilon} \leqslant M \leqslant T^{1-\vepsilon}$.  Define 
	\begin{align*}
		\breve{S} (  \mathcal{A})   =	   \sum_{T < t_j \leqslant T+M}    \bigg| \sum_{   n \sim N }  a_{n} \lambda_{j} (n) n^{i t_j}  \bigg|^2, 
	\end{align*}
for any complex sequence  $\mathcal{A} = \{a_n\}$. 
Then 
\begin{align*}
	\breve{S} (  \mathcal{A}) \Lt  T^{\vepsilon} \big( \breve{D} (  \mathcal{A}) + \breve{E}  (  \mathcal{A}) + \breve{P}  (  \mathcal{A})   \big),
\end{align*}
where 
\begin{align*}
	\breve{D} (  \mathcal{A})  =  M T \sum_{ n \sim N } |a_{n}|^2, \qquad \breve{E}  ( \mathcal{A}) = \frac {N^{3/2+\vepsilon}} {T^A}   \sum_{ n \sim N } |a_{n}|^2, 
\end{align*}
for any $A > 0$, and 
\begin{align*}
	\breve{P}  ( \mathcal{A}) = M T \sum_{q \Lt N/T}  \frac 1 {q} \int_{-M^{\vepsilon}/ M}^{M^{\vepsilon}/ M} \sum_{c \Lt N/ T q} \frac 1 {c}  \, \sumx_{   a            (\mathrm{mod} \, c) } \bigg|   \sum_{ n \sim N }  a_{n}   e \Big(   \frac { {a      }     n} {c} \Big)  e \bigg( \frac {n t} {c q} \bigg) \bigg|^2  \nd t . 
\end{align*}
\end{theorem}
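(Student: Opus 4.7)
My plan is to combine an approximate functional equation with Theorem \ref{thm: LS GL(2)} taken as a black box. At the special point $s_j = 1/2 + it_j$, four of the eight gamma factors of $L(s, \phi \times u_j)$ have bounded argument (because $-it_j$ from $u_j$ cancels against $+it_j$ from $s$), while the other four are of size $T$, so the analytic conductor is $\asymp T^4$ and the AFE reduces the problem to showing, for each dyadic $N \Lt T^{2+\vepsilon}$,
\begin{align*}
\sum_{T < t_j \leqslant T + T^{1-\vepsilon}} \bigg| \sum_{n \sim N} a_n \lambda_j(n) n^{-it_j} \bigg|^2 \Lt T^{2+\vepsilon}, \qquad a_n = \frac{A_\phi(1,1,n)}{\sqrt{n}} V(n/N),
\end{align*}
for a fixed smooth bump $V$. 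I apply Theorem \ref{thm: LS GL(2)} with $M = T^{1-\vepsilon}$. The diagonal term $\breve{D} = MT \sum_n |a_n|^2 \Lt MT \cdot N^{\vepsilon} \Lt T^{2+\vepsilon}$ is immediate from the Rankin--Selberg bound $\sum_{n \leqslant X} |A_\phi(1,1,n)|^2 \Lt_\phi X^{1+\vepsilon}$, and $\breve{E}$ is negligibly small by choosing $A$ large. All the difficulty is carried by the off-diagonal $\breve{P}$.

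For $\breve{P}$ the natural attack is $\GL_4$ Voronoi summation applied to the inner sum
\begin{align*}
\sum_{n \sim N} \frac{A_\phi(1,1,n)}{\sqrt{n}}\, V(n/N)\, e\!\Big(\frac{an}{c}\Big)\, e\!\bigg(\frac{nt}{cq}\bigg),
\end{align*}
which converts it into a dual sum of effective length $\asymp c^4/N$ of the schematic shape
\begin{align*}
\frac{c}{\sqrt{N}}\, \sum_{\pm} \sum_{d \mid c} \sum_{m_2 m_3^2 d^3\, \Lt\, c^4 T^\vepsilon/N} \frac{A_\phi(d, m_3, m_2)}{(m_2 m_3 d)^{1/2}}\, \mathrm{KS}_3(\bar{a}, \pm m_2 m_3^2 d^3; c, d)\, G^{\pm},
\end{align*}
where $\mathrm{KS}_3$ is a hyper-Kloosterman sum and $G^\pm$ is the Voronoi-transformed weight. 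Since the extra oscillation $e(nt/cq)$ has a phase derivative $t/(cq)$ that can be comparable to the stationary derivative of the Bessel-type Voronoi kernel, Lemma \ref{lem: analysis of integral} is invoked to absorb this shift and to confirm that $G^\pm$ remains essentially $O(1)$-inert on its (shifted) effective support. I then apply the individual pointwise bound on $\GL_4$ coefficients from Lemma \ref{lem: bound for A}, open the square in $a$, and use the classical large sieve (Lemma \ref{lem: LS}) on the hyper-Kloosterman sums; dyadic decomposition of $m_2, m_3, d$, the trivial $t$-integration on $[-M^\vepsilon/M, M^\vepsilon/M]$, and summation over $q \Lt N/T$ and $c \Lt N/(Tq)$ should then assemble the desired bound $T^{2+\vepsilon}$.

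The main obstacle will be the careful bookkeeping of the five compound summation variables $(c, q, d, m_2, m_3)$ under the dyadic decomposition, so that the prefactor $MT$, the Voronoi normalisation $c/\sqrt{N}$, the dual-length constraint $c^4/N$, the improved estimate from Lemma \ref{lem: bound for A}, and the $\GL_4$ Rankin--Selberg saving in the $m$-diagonal all conspire to the clean bound $T^{2+\vepsilon}$ with no stray logarithmic or polynomial loss in $T$. The fact that this succeeds without any further Poisson summation --- and without ever opening the original $|L|^2$-square as in \cite{Chandee-Li-GL(4)-Special-Points} --- is precisely the `Poisson-free' gain advertised in the introduction, and it is what makes Theorem \ref{thm: LS GL(2)} so powerful in the $\GL_4$ setting.
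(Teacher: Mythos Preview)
You are not proving the stated theorem. The statement in question is Theorem~\ref{thm: LS GL(2)}, the twisted spectral large sieve for $\SL_2(\BZ)$; yet your very first sentence takes ``Theorem~\ref{thm: LS GL(2)} as a black box'' and proceeds to sketch a proof of Theorem~\ref{thm: main} (the $\GL_4\times\GL_2$ moment bound) instead. In the paper, Theorem~\ref{thm: LS GL(2)} is not proved at all in this article: it is quoted from \cite[Theorem~2]{Qi-GL(3)-Special-Points}, where it is obtained from the Kuznetsov trace formula together with an analysis of the Bessel integral (hence the diagonal term $\breve{D}$, the negligible error $\breve{E}$, and the Kloosterman contribution $\breve{P}$). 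A proof of Theorem~\ref{thm: LS GL(2)} therefore has nothing to do with the approximate functional equation or $\GL_4$ Vorono\"i; it is a purely $\GL_2$-spectral statement.

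Even read as a sketch of Theorem~\ref{thm: main}, your outline has genuine gaps relative to the paper's argument. First, the Rankin--Selberg Dirichlet series for $L(s,\phi\times u_j)$ involves $A(1,n_2,n)$ with a double sum over $n_2,n$ (see the paper's \S5.1), not just $A_\phi(1,1,n)$; the extra $n_2$-sum is handled by Cauchy and by Lemma~\ref{lem: Ramanujan, 1}, and the Vorono\"i step is applied with parameters $(q_1,q_2)=(n_2,1)$, which changes the structure of the Kloosterman side. Second, and more seriously, after Vorono\"i the Hankel transform splits into a non-oscillatory regime ($Ny\Lt T^\vepsilon$) and a stationary-phase regime ($Ny\Gt T^\vepsilon$, forcing $n\asymp N_\natural$ with an explicit phase $e(-3(y/r)^{1/3})$); the paper treats the first by the classical large sieve (Lemma~\ref{lem: LS}) and the second by Young's \emph{hybrid} large sieve (Lemma~\ref{lem: Young's LS}). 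Your plan uses only Lemma~\ref{lem: LS} and ``trivial $t$-integration'', which discards exactly the $t$-oscillation exploited by Lemma~\ref{lem: Young's LS}; without that ingredient the $N_\natural$-regime does not close to $T^{2+\vepsilon}$.
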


\begin{proof}
	As it is deduced from the Kuznetsov trace formula,  \cite[Theorem 2]{Qi-GL(3)-Special-Points} is more explicit, involving the harmonic weight $\omega_j$ and the smooth weight 
	 \begin{align*}
	 	h ( t  ) = \exp \bigg( \hspace{-1pt}  -  \frac {(t - T)^2} {M^2}    \bigg) + \exp \bigg(  \hspace{-1pt} -  \frac {(t + T)^2} {M^2}    \bigg) . 
	 \end{align*}
 More explicitly, we have the identity
 \begin{align*}
 	S (\mathcal{A})  + T (\mathcal{A})  = D (\mathcal{A}) + P (\mathcal{A}) , 
 \end{align*}
where 
\begin{align*}
		S (\mathcal{A})   & =	   \sum_{j = 1 }^{\infty}   \omega_j { h  ( t_j ) }   \bigg| \sum_{ n \sim N }  a_{n} \lambda_{j} (n) n^{i t_j}  \bigg|^2   ,   
\end{align*}
$T (\mathcal{A})$, $D (\mathcal{A})$, and $P (\mathcal{A})$ are the Eisenstein, diagonal, and off-diagonal (Kloosterman) contributions, respectively. It is proven in \cite[Theorem 2]{Qi-GL(3)-Special-Points} that 
\begin{align*}
	D (\mathcal{A}) = \frac {2} {\pi \sqrt{\pi}} \breve{D} (\mathcal{A}) + O \big(\breve{E} (\mathcal{A}) \big), \qquad P (\mathcal{A}) = O \big( \breve{P} (\mathcal{A}) \big), 
\end{align*}
and hence, if we drop the Eisenstein contribution $T (\mathcal{A})$ by positivity, then 
\begin{align*}
	S (  \mathcal{A}) \Lt \breve{D} (  \mathcal{A}) + \breve{E}  (  \mathcal{A}) + \breve{P}  (  \mathcal{A}) . 
\end{align*}
Further, we may replace $ S  (  \mathcal{A})$ by $ \breve{S} (  \mathcal{A}) $ at the cost of the factor $T^{\vepsilon}$, since for any $t_j$ on $(T, T+M]$ we have lower bounds $ h (t_j) \Gt 1 $ and $\omega_j \Gt T^{-\vepsilon}$, the latter due to Iwaniec  \cite[Theorem 2]{Iwaniec-L(1)}. Finally, we may as well drop the real-valued assumption on the sequence $\mathcal{A}$ in \cite[Theorem 2]{Qi-GL(3)-Special-Points} (as explained in \cite[Remark 1.1]{Qi-GL(3)-Special-Points}). 
\end{proof}

\begin{remark}\label{rem: short}
	It should be stressed that $ M \leqslant T^{1-\vepsilon}$ is crucial in the analysis in \cite[\S 3]{Qi-GL(3)-Special-Points}, so this explains the range $ T < t_j \leqslant T + {T}^{1 - \vepsilon} $ in Theorem \ref{thm: main}. 
\end{remark}

\section{\texorpdfstring{Maass Cusp Forms for {\protect\scalebox{1.06}{$\SL_4 (\BZ)$}}}{Maass Cusp Forms for SL\unichar{"2084}(Z)} }

Let $\phi$ be a Hecke--Maass cusp  form for $\SL_4 (\BZ)$ of Fourier coefficients $ A (n_1 , n_2    , n_3    )  $, normalized so that $ A(1,1,1) = 1 $, and Langlands parameters $   \{ \lambda    _1, \lambda    _2, \lambda    _3 ,\lambda    _4\} $, with $\lambda    _1+\lambda    _2+\lambda    _3+\lambda    _4 = 0$.  

\subsection{Averaged Ramanujan Bounds for {\protect\scalebox{1.06}{$\SL_4 (\BZ)$}}}


In this sub-section, we consider bounds for certain sums of the Fourier coefficients $A (n_1, n_2, n_3)$.  Clearly such bounds also hold  for the dual Fourier coefficients $ A (n_3, n_2, n_1) = \overline{ A (n_1, n_2, n_3)} $.   

Firstly, let us record here Lemma 2.2 from \cite{Chandee-Li-GL(4)-Special-Points}, whose proof relies not only on the theory of Rankin--Selberg, but also on the functoriality of  exterior square on $\GL_4$ due to Kim \cite{Kim-Sarnak}.   

\begin{lem}\label{lem: Ramanujan, 1}
	 We have
	 \begin{align*}
	 	\sum_{n_1 \leqslant X_1} \sum_{n_2 \leqslant X_2} |A(n_1, n_2, 1) |^2 \Lt_{\phi, \vepsilon} (X_1 X_2)^{1+\vepsilon}.   
	 \end{align*}
\end{lem}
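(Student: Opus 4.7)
The plan is to identify the bivariate generating Dirichlet series
\begin{align*}
    D(s_1, s_2) = \sum_{n_1, n_2 \geqslant 1} \frac{|A(n_1, n_2, 1)|^2}{n_1^{s_1} n_2^{s_2}}
\end{align*}
as a finite product of standard automorphic $L$-functions and then exploit its absolute convergence in the region $\mathrm{Re}(s_1), \mathrm{Re}(s_2) > 1$. Granted such convergence, positivity yields
\begin{align*}
    \sum_{n_1 \leqslant X_1} \sum_{n_2 \leqslant X_2} |A(n_1, n_2, 1)|^2 \leqslant (X_1 X_2)^{1 + \vepsilon} D(1 + \vepsilon, 1 + \vepsilon) \Lt_{\phi, \vepsilon} (X_1 X_2)^{1 + \vepsilon}
\end{align*}
by the trivial majorisation $1 \leqslant (X_i / n_i)^{1 + \vepsilon}$ on the range $n_i \leqslant X_i$, so the entire proof reduces to identifying $D$ and verifying its absolute convergence for $\mathrm{Re}(s_i) > 1$.

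To identify $D$, I would use the Schur-polynomial expression of the Hecke eigenvalues at a prime $p$, namely
\begin{align*}
    A(p^a, p^b, 1) = S_{(a + b,\, b,\, 0,\, 0)}(\alpha_1(p), \ldots, \alpha_4(p)),
\end{align*}
where $\alpha_1(p), \ldots, \alpha_4(p)$ are the Satake parameters of $\phi$. The local Euler factor of $D$ is then the bivariate sum $\sum_{a, b \geqslant 0} |S_{(a + b, b, 0, 0)}(\alpha_1(p), \ldots, \alpha_4(p))|^2 \, p^{-a s_1 - b s_2}$, which I would evaluate via a Cauchy-type identity restricted to partitions of length at most $2$. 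The resulting rational function carries two species of factors: the bilinears $\alpha_i(p) \overline{\alpha_j(p)}$, characteristic of the Rankin--Selberg $L$-function $L(\ast, \phi \times \tilde\phi)$; and the quartic monomials $\alpha_i(p) \alpha_j(p) \overline{\alpha_k(p) \alpha_l(p)}$ with $i < j$ and $k < l$, characteristic of $L(\ast, \wedge^2 \phi \times \widetilde{\wedge^2 \phi})$. Taking the product over primes, $D(s_1, s_2)$ should globally factor --- up to finitely many shifted zeta values --- as a product of these two $L$-functions at appropriate shifts in $s_1, s_2$.

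Both factors are amenable to standard analytic theory: the first is classical Rankin--Selberg, while the second becomes a $\GL_6 \times \GL_6$ Rankin--Selberg $L$-function once one invokes Kim's functoriality theorem that $\wedge^2 \phi$ is an isobaric automorphic representation on $\GL_6$. Each factor converges absolutely for $\mathrm{Re}(s) > 1$, from which the required convergence of $D(s_1, s_2)$ on $\mathrm{Re}(s_i) > 1$ follows. The main obstacle is the combinatorial step --- running the restricted Cauchy identity cleanly enough to match the local Euler factors on both sides. Kim's theorem is indispensable here, since without the automorphy of $\wedge^2 \phi$ on $\GL_6$ the exterior-square factors would not be recognisable as automorphic $L$-functions, and the absolute convergence of $D$ past the trivial range would fail.
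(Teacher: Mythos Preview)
Your proposal is correct and aligns with the paper's treatment: the paper does not prove this lemma but quotes it as Lemma~2.2 of Chandee--Li, remarking that the proof ``relies not only on the theory of Rankin--Selberg, but also on the functoriality of exterior square on $\GL_4$ due to Kim.'' Your sketch invokes precisely these two ingredients---Rankin--Selberg convolution for $\phi\times\tilde\phi$ and for $\wedge^2\phi\times\widetilde{\wedge^2\phi}$, the latter made available by Kim's automorphy of $\wedge^2\phi$ on $\GL_6$---so the approaches coincide. One small caveat: the bivariate series $D(s_1,s_2)$ need not factor \emph{exactly} as a product of the two Rankin--Selberg $L$-functions times shifted zeta factors; what the local computation actually yields is that each Euler factor of $D$ is dominated by (or equals up to a factor holomorphic past $\mathrm{Re}(s_i)>1/2$) the product of the corresponding local Rankin--Selberg factors, which suffices for the absolute convergence you need.
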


Next, we shall prove in Appendix \ref{append} the following individual bound by the works of Kim--Sarnak \cite[Appendix 2]{Kim-Sarnak} and  Luo--Rudnick--Sarnak \cite{Luo-Rudnick-Sarnak}.  

\begin{lem}\label{lem: bound for A} Let  $$\theta_4=\frac 1 2 - \frac 1 {11}, \qquad \theta_6=\frac 1 2 -\frac 1 {37}, $$ be the exponents towards the Ramanujan conjecture for $ \GL_4 $ and $\GL_6$ in  \cite[Appendix 2]{Kim-Sarnak} and   \cite{Luo-Rudnick-Sarnak}. Then 
	 \begin{align*}
	 	A(n_1, n_2, n_3)   \Lt_{\vepsilon} n_1^{\theta_4+\vepsilon} n_2^{\theta_6 +\vepsilon} n_3^{\theta_4+\vepsilon} . 
	 \end{align*}
\end{lem}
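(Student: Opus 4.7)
The plan is to establish a local bound at each prime $p$ and reassemble via Hecke multiplicativity. Since $A(n_1 m_1, n_2 m_2, n_3 m_3) = A(n_1, n_2, n_3) A(m_1, m_2, m_3)$ for coprime triples, it suffices to prove
\begin{align*}
|A(p^a, p^b, p^c)| \Lt (a + b + c + 1)^{O(1)} \, p^{a \theta_4 + b \theta_6 + c \theta_4}
\end{align*}
at every prime $p$; the resulting polynomial constants are then absorbed into $(n_1 n_2 n_3)^{\vepsilon}$ via the divisor bound.

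For the local bound, write $A(p^a, p^b, p^c) = S_{\lambda}(\alpha_1, \alpha_2, \alpha_3, \alpha_4)$, the Schur polynomial in the Satake parameters (normalized so that $\alpha_1 \alpha_2 \alpha_3 \alpha_4 = 1$) for the partition $\lambda = (a + b + c, b + c, c, 0)$. Expanding $S_\lambda$ as a sum over semistandard Young tableaux, each summand is a monomial $\prod \alpha_{T(i,j)}$, which I factor along the columns of $\lambda$: in this shape, $a$ columns have height $1$, $b$ have height $2$, and $c$ have height $3$. A height-$h$ column with entries $i_1 < \cdots < i_h$ in $\{1,2,3,4\}$ contributes $\alpha_{i_1} \cdots \alpha_{i_h}$, and I bound the three cases separately. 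For height $1$, $|\alpha_i| \Lt p^{\theta_4}$ by Kim--Sarnak applied to $\phi$. For height $3$, the normalization $\alpha_1 \alpha_2 \alpha_3 \alpha_4 = 1$ yields $\alpha_{i_1} \alpha_{i_2} \alpha_{i_3} = \alpha_{i_4}^{-1}$, again bounded by $p^{\theta_4}$ via Kim--Sarnak for the contragredient $\tilde{\phi}$. The crucial case is height $2$: here $\alpha_i \alpha_j$ is a Satake parameter of the exterior square lift $\wedge^2 \phi$, an isobaric automorphic form on $\GL_6$ by Kim's theorem, so Luo--Rudnick--Sarnak gives $|\alpha_i \alpha_j| \Lt p^{\theta_6}$. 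Multiplying the column contributions bounds every monomial by $p^{a \theta_4 + b \theta_6 + c \theta_4}$, and the number of tableaux equals $\dim V_\lambda$, which is polynomial in $a + b + c$ by the hook-content formula.

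The only subtlety is that $\wedge^2 \phi$ need not be globally cuspidal on $\GL_6$: in that case one takes its isobaric decomposition into cuspidal constituents on smaller $\GL_n$ (with $n \leqslant 6$) and invokes Kim--Sarnak for $n \leqslant 4$ or Luo--Rudnick--Sarnak for $n = 5, 6$ on each piece; since all the resulting exponents are at most $\theta_6$, the uniform bound $|\alpha_i \alpha_j| \Lt p^{\theta_6}$ persists. This is a bookkeeping check rather than a substantive obstacle; the essential content of the argument is the clean column-by-column factorization of the Schur polynomial monomials, which is precisely what allows the exterior square bound to improve the exponent on the middle coordinate $n_2$ beyond the trivial product of Kim--Sarnak bounds.
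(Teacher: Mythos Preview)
Your proof is correct and takes a genuinely different route from the paper's. The paper works inductively via the Hecke recursion (Lemma 3.3 of Chandee--Li)
\begin{align*}
A(p^{\nu_1}, p^{\nu_2}, p^{\nu_3}) = A(p^{\nu_1},1,1)A(1,p^{\nu_2},p^{\nu_3}) - \cdots,
\end{align*}
first establishing bounds for $A(p,p^{\nu},1)$, then $A(p^{\nu_1},p^{\nu_2},1)$, and finally the general case, each time feeding in the Kim--Sarnak bound for $A(p^{\nu},1,1)$ and the Luo--Rudnick--Sarnak bound for $A(1,p^{\nu},1)$. Your argument instead invokes the Casselman--Shalika/Shintani identity $A(p^a,p^b,p^c)=S_{(a+b+c,\,b+c,\,c,\,0)}(\alpha_1,\dots,\alpha_4)$ and bounds each monomial in the SSYT expansion by grouping along columns, so that height-$1$ and height-$3$ columns see $\theta_4$ directly (the latter via $\alpha_1\alpha_2\alpha_3\alpha_4=1$), while height-$2$ columns see $\theta_6$ through Kim's exterior-square lift. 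This is cleaner and more conceptual: it explains transparently \emph{why} the exponents $(\theta_4,\theta_6,\theta_4)$ attach to $(n_1,n_2,n_3)$, and it generalizes at once to $\GL_n$ with the appropriate fundamental-representation lifts. The paper's approach, by contrast, is entirely elementary---it never names Satake parameters or Schur polynomials---and yields slightly more explicit numerical constants $(\nu_1+1)^3(\nu_2+1)^6(\nu_3+1)^3$ in place of your $\dim V_\lambda$. Your remark on the possibly non-cuspidal $\wedge^2\phi$ is handled correctly: the isobaric pieces live on $\GL_{n}$ with $n\leqslant 6$, and all available bounds there are at least as strong as $\theta_6$.
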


As a corollary, we have an improvement of Lemma 3.5 in \cite{Chandee-Li-GL(4)-Special-Points} by adopting a simple argument of Blomer \cite{Blomer}.

\begin{lem}\label{lem: Ramanujan, 2}
	 We have
	 \begin{align*}
	 	\sum_{n  \leqslant X } |A(n, a_2, a_3) |^2 \Lt_{\phi, \vepsilon}    a_2^{35/37+\vepsilon} a_3^{9/11+\vepsilon} X.   
	 \end{align*} 
\end{lem}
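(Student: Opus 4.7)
The plan is to exploit Hecke multiplicativity to factor the Fourier coefficient into a ``ramified'' part depending on a divisor $\ell$ of a high power of $a_2 a_3$ (where I invoke the pointwise Lemma \ref{lem: bound for A}) and an ``unramified'' part depending on $m$ coprime to $a_2 a_3$ (where I invoke the averaged Lemma \ref{lem: Ramanujan, 1}). This is the simple argument of Blomer advertised in the introduction.

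Concretely, I first write each $n \leqslant X$ uniquely as $n = \ell m$ with $\ell \mid (a_2 a_3)^{\infty}$ (meaning every prime factor of $\ell$ divides $a_2 a_3$) and $(m, a_2 a_3) = 1$; in particular $(m, \ell a_2 a_3) = 1$. The standard multiplicativity of Hecke eigenvalues for $\SL_4 (\BZ)$ then gives $A(n, a_2, a_3) = A(\ell, a_2, a_3) \, A(m, 1, 1)$, and so
\begin{equation*}
	\sum_{n \leqslant X} |A(n, a_2, a_3)|^2 \leqslant \sum_{\substack{\ell \mid (a_2 a_3)^{\infty} \\ \ell \leqslant X}} |A(\ell, a_2, a_3)|^2 \sum_{m \leqslant X/\ell} |A(m, 1, 1)|^2.
\end{equation*}
Applying Lemma \ref{lem: Ramanujan, 1} with $X_2 = 1$ bounds the inner sum by $\Lt_{\phi, \vepsilon} (X/\ell)^{1+\vepsilon}$, while Lemma \ref{lem: bound for A} applied pointwise bounds $|A(\ell, a_2, a_3)|^2$ by $\Lt_{\vepsilon} \ell^{9/11+\vepsilon} a_2^{35/37+\vepsilon} a_3^{9/11+\vepsilon}$, using $2\theta_4 = 9/11$ and $2\theta_6 = 35/37$. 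Combined, the total is
\begin{equation*}
	\Lt_{\phi, \vepsilon} \; X^{1+\vepsilon} a_2^{35/37+\vepsilon} a_3^{9/11+\vepsilon} \sum_{\ell \mid (a_2 a_3)^{\infty}} \ell^{-2/11 + \vepsilon}.
\end{equation*}

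The remaining $\ell$-sum is an Euler product over primes $p \mid a_2 a_3$, uniformly bounded by $2^{\omega(a_2 a_3)} \Lt_{\vepsilon} (a_2 a_3)^{\vepsilon}$, thanks to the strict inequality $\theta_4 < 1/2$ (the essential Kim--Sarnak input) which makes the exponent $-2/11 + \vepsilon$ negative. Absorbing this factor into the existing $\vepsilon$'s yields the claimed bound. I do not foresee any real obstacle; the argument is bookkeeping on top of the two input lemmas. The one point worth highlighting is that the trivial choice $\theta_4 = 1/2$ would produce a divergent logarithmic $\ell$-sum, so the nontrivial Kim--Sarnak bound packaged in Lemma \ref{lem: bound for A} is genuinely what makes this trick work.
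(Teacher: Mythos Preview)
Your approach is essentially identical to the paper's: decompose $n = \ell m$ with $\ell \mid (a_2 a_3)^\infty$ and $(m, a_2 a_3)=1$, factor via multiplicativity, apply Lemma~\ref{lem: bound for A} pointwise to the ramified part, and an averaged bound to the unramified part; the $\ell$-sum converges precisely because $2\theta_4 < 1$. Two minor cosmetic points: (i) the paper invokes the Rankin--Selberg bound $\sum_{n \leqslant Y} |A(n,1,1)|^2 \Lt_\phi Y$ rather than Lemma~\ref{lem: Ramanujan, 1}, so it obtains the clean factor $X$ in the statement rather than your $X^{1+\vepsilon}$; (ii) the local Euler factor $\sum_{k \geqslant 0} p^{k(-2/11+\vepsilon)} = (1-p^{-2/11+\vepsilon})^{-1}$ exceeds $2$ already at $p=2$, so the literal bound $2^{\omega(a_2 a_3)}$ is off, though of course $C^{\omega(a_2 a_3)} \Lt_\vepsilon (a_2 a_3)^\vepsilon$ for any fixed $C$ gives what you need.
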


\begin{proof}
	 Similar to the proof of (10) in \cite{Blomer},  by positivity and multiplicativity, 
	 \begin{align*}
	 	\sum_{n \leqslant X} |A(n , a_2, a_3) |^2 & \leqslant  
	 	\sum_{m|(a_2 a_3)^{\infty}}\mathop{\sum_{n\leqslant X /m}}_{(n,ma_2 a_3 )=1}|A(mn,a_2 ,a_3 )|^2\\
	 	& \leqslant  \sum_{m|(a_2 a_3 )^{\infty}} |A(m,a_2 ,a_3 )|^2 \sum_{n\leqslant X /m}|A(n,1,1)|^2 . 
	 \end{align*}
 Then the proof is completed by the Rankin--Selberg bound 
 \begin{align*}
 	 \sum_{n\leqslant X }|A(n,1,1)|^2 \Lt_{\phi} X, 
 \end{align*}
followed by the bound in Lemma \ref{lem: bound for A}. 
\end{proof}

\begin{remark}
	Similarly, we may also improve Lemma {\rm3.1} in \cite{Chandee-Li-GL(4)-Special-Points} {\rm(}although not required in our proof of Theorem {\rm\ref{thm: main}}{\rm)}, with the square-free condition on $a_1, a_3$ removed,  
\begin{align*}
	\sum_{n  \leqslant X } |A(a_1, n, a_3) |^2 \Lt_{\phi, \vepsilon}    (a_1 a_3)^{9/11+\vepsilon} X .   
\end{align*} 
This is a consequence of  Lemma {\rm\ref{lem: bound for A}} above and Lemma {\rm3.2} in \cite{Chandee-Li-GL(4)-Special-Points} {\rm(}the Rankin--Selberg bound for the exterior square $L$-function $L(s, \phi, \Lambda^2)${\rm)}
 \begin{align*}
	\sum_{n\leqslant X }|A(1,n,1)|^2 \Lt_{\phi} X . 
\end{align*}
\end{remark}


\subsection{Vorono\"i Summation Formula for {\protect\scalebox{1.06}{$\SL_4 (\BZ)$}}} 

The Vorono\"i summation formula for $\SL_4 (\BZ)$ is another main ingredient in our analysis.  In this sub-section, let us specialize the Vorono\"i summation formula in \cite{MZ-Voronoi}\footnote{In comparison to \cite{Miller-Schmid-2009}, the Vorono\"i summation formula for $\GL_N $ in \cite{MZ-Voronoi} is normalized (with only an extra factor $1/|y|$ in the Hankel transform) so that it coincides with the classical Vorono\"i summation formula for $\GL_2$ and   Poisson summation formula for $\GL_1$.  }  and incorporate the Bessel kernel of the Hankel integral transform from \cite{Qi-Bessel}\footnote{  In comparison to  \cite{MZ-Voronoi}, the Hankel integral transform  for $\GL_N $ in \cite{Qi-Bessel} differs slightly in the argument by the sign $(-1)^{N}$ so that it is   the inverse Fourier transform in the case of $\GL_1$. Of course there is no difference in our setting of $\GL_4$.}. 

\begin{defn}[Kloosterman sum] \label{defn: Kloosterman}
	 Let $ a      , n \in \BZ$, $c, q_1, q_2, d_1, d_2 \in \BZ_+$ be such that  
	 \begin{align*}
	 	d_1 | c q_1, \qquad d_2 \bigg| \frac {cq_1 q_2} {d_1 }. 
	 \end{align*}
 Define the Kloosterman sum 
	 \begin{align*}
	 	\mathrm{Kl}_2 (a      ,n,c;q_1,q_2,d_1,d_2)=\mathop{\mathop{\sumx\sumx}_{v     _1 (\mathrm{mod}\,
	 			cq_1/d_1 )}}_{v     _2 (\mathrm{mod}\,cq_1q_2/d_1d_2 )} {e}\left(\frac{ a       v     _1 d_1}{c}+\frac{\overline{v     }_1 v     _2 d_2 }{cq_1/d_1}+\frac{n\overline{v     }_2}{cq_1q_2/d_1d_2}\right),   
	 \end{align*}
\end{defn}

\begin{defn}[Hankel transform]  \label{defn: Hankel}
	For $\omega  \in C_c^{\infty} (\BR_+)$ define its Hankel integral transform 
	\begin{align*}
		\Omega (y) = \int_{\BR_{+} }  \omega (x) J_{\phi} (  x y) \nd x, \qquad \text{\rm($y \in \BR_{-} \cup \BR_{+} $)}, 
	\end{align*} 
	with the Bessel kernel $ J_{\phi} (  x  )  $ associated to $\phi$ (as in \cite[\S   3.3]{Qi-Bessel}). 
\end{defn}

\begin{defn}[Bessel kernel]  \label{defn: Bessel}
	More explicitly, 
	\begin{align*}
		J_{\phi} (\pm x) = \frac 1 {4\pi i} \int_{-i\infty}^{ i \infty} \big( G^{0}_{ \phi } (s) \pm G^{1} _{ \phi } (s) \big) x^{-s } \nd s, 
	\end{align*}
	where the curved integral contour passes to the right of $ \lambda_{l} - \mathbf{N}_0 = \{ \lambda_l, \lambda_l - 1, \cdots \} $, and
	\begin{align*}
		G_{\phi}^{\delta} (s) = \prod_{l=1}^{4} G_{\delta} (s - \lambda_l), \qquad G_{\delta} (s) =      \left\{ \begin{aligned}
			&  2 { (2\pi)^{ -s }  }    \Gamma  ( s  )  \cos   (\pi  s/2 )   ,   & & \text{ if } \delta = 0, 
			\\
			&  2i { (2\pi)^{ -s }  }    \Gamma  ( s  )  \sin  (\pi s/2 )  ,  & & \text{ if } \delta = 1 ;  
		\end{aligned} \right. 
	\end{align*}
	here $G_{\delta} (s)$  may also be expressed as the gamma quotient
	\begin{align*}
		G_{\delta} (s) = i^{\delta} \frac {\Gamma_{\BR} (\delta + s) } {\Gamma_{\BR} (\delta + 1-s)},  \qquad \Gamma_{\BR} (s) = \pi^{-s/2} \Gamma (s/2),
	\end{align*}
	so that
	\begin{align*}
		G_{\phi}^{\delta} (s) = \frac { \gamma_{\delta} (s, \widetilde{\phi})  } { \gamma_{\delta} (1-s, \phi)  }, \quad \gamma_{\delta} (s, \phi) = \prod_{l=1}^{4} \Gamma_{\BR} (\delta + s + \lambda_l),  \ \gamma_{\delta} (s, \widetilde{\phi}) = \prod_{l=1}^{4} \Gamma_{\BR} (\delta + s - \lambda_l). 
	\end{align*}
\end{defn}

\begin{lem}[Vorono\"i summation formula]\label{lem: Voronoi} 
Let notation be as above. We have  
\begin{equation*} 
	\begin{split}
		\sum_{n=1}^{\infty}    {A(q_2,q_1   ,n)}  e \Big( & \frac{\widebar{a      }n}{c} \Big)   \omega (n)  = \frac 1 {c^3 q_1^2q_2} \sum_{\pm}   \sum_{d_1\mid cq_1   }\sum_{d_2\mid cq_1q_2/d_1}d_1^2d_2     \\
		& \cdot   \sum_{n =1}^{\infty}   A(n , d_2,d_1)     {\mathrm{Kl}_2 \left({a      }, \mp n, c; q_1,q_2,d_1,d_2\right)}   \Omega \bigg( \! \! \pm \frac{d_1^3d_2^2 n} {c^4q_1^2q_2} \bigg).  
	\end{split}
\end{equation*}  
\end{lem}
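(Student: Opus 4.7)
The plan is to deduce Lemma \ref{lem: Voronoi} as a specialization and repackaging of the general $\GL_N$ Vorono\"i summation formula of Miller--Zhou \cite{MZ-Voronoi} at $N=4$, with the resulting Hankel transform rewritten through the Bessel kernel of \cite{Qi-Bessel}. There are three tasks: identify the combinatorial right-hand side; identify the twofold exponential sum with $\mathrm{Kl}_2$; and translate the Mellin--Barnes integral into $\Omega$.

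First I would apply the Miller--Zhou formula to the twisted sum
\[
\sum_{n=1}^{\infty} A(q_2, q_1, n) \, e\bigg(\frac{\overline{a} n}{c}\bigg)  \omega(n).
\]
Its dual side is, by construction in \cite{MZ-Voronoi}, a sum over divisors $d_1 \mid c q_1$ and $d_2 \mid c q_1 q_2/d_1$ of the dual Fourier coefficients $A(n, d_2, d_1)$, carrying the weight $d_1^2 d_2$ and the overall prefactor $1/(c^3 q_1^2 q_2)$ dictated by the $\GL_4$ normalization of the formula. The inner arithmetic factor is a twofold sum over reduced residues $v_1 \ (\mathrm{mod}\, cq_1/d_1)$ and $v_2 \ (\mathrm{mod}\, cq_1q_2/d_1d_2)$ with three chained exponentials; comparing these exponentials term by term with Definition \ref{defn: Kloosterman} shows that the factor coincides exactly with $\mathrm{Kl}_2(a, \mp n, c; q_1, q_2, d_1, d_2)$, the $\mp$ sign being dictated by the branch $\pm$ selected in the dual argument.

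The analytic step is to replace the inverse Mellin transform against a ratio of gamma factors in \cite{MZ-Voronoi} by the Hankel integral $\int_0^{\infty} \omega(x) J_\phi(xy) \, dx$. By the construction of \cite[\S 3.3]{Qi-Bessel}, the Mellin transform of $J_\phi$ is precisely the gamma ratio attached to the Langlands parameters $\{\lambda_1, \lambda_2, \lambda_3, \lambda_4\}$, so the Mellin--Barnes integral in \cite{MZ-Voronoi} becomes $\Omega(y)$ up to the two normalization discrepancies flagged in the footnotes: the extra factor $1/|y|$ built into $J_\phi$ absorbs a matching $|y|$ on the dual side of \cite{MZ-Voronoi}, and the sign $(-1)^N$ is trivial for $N=4$.

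The main obstacle is bookkeeping the exponents. After collecting all the powers of $c, q_1, q_2, d_1, d_2$ coming from the $N$-th root rescaling in the Miller--Zhou formula together with the $|y|^{-1}$ absorbed into $J_\phi$, one must verify that the argument of $\Omega$ is exactly $\pm d_1^3 d_2^2 n/(c^4 q_1^2 q_2)$ and that the sign $\pm$ there is linked to the sign $\mp$ sitting inside $\mathrm{Kl}_2$. This is mechanical but error-prone; as sanity checks, the identity should reduce to the classical $\GL_2$ Vorono\"i formula when $q_1 = q_2 = 1$ and to the known $\GL_3$ Vorono\"i formulas when $q_2 = 1$ or $q_1 = 1$.
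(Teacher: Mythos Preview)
Your approach is exactly what the paper intends: the lemma is stated there without proof, after the remark that one should ``specialize the Vorono\"i summation formula in \cite{MZ-Voronoi} and incorporate the Bessel kernel of the Hankel integral transform from \cite{Qi-Bessel}'', which is precisely the three-step programme you outline. One small caveat on your final paragraph: setting $q_1=q_2=1$ does not literally reduce the identity to the $\GL_2$ Vorono\"i formula (the form $\phi$ is still on $\GL_4$); rather, it collapses the divisor sums and the Kloosterman sum to their simplest shape, which is still a useful consistency check on the exponents.
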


Finally, let us summarize the fundamental properties of the Bessel kernel $ J_{\phi} (x) $ in the next lemma. 

\begin{lem}\label{lem: Bessel}
	We have 
	\begin{align}\label{5eq: bound x<1}
		x^{j}	J_{\phi}^{(j)} (x) \Lt_{j, \phi}   \frac 1 {\sqrt{|x|}},
	\end{align}
	for $x \Lt 1$, and for any integer $K > 0$,    
	\begin{align}\label{4eq: asymptotic, Bessel, R} 
		J_{\phi}  ( x ) = \sum_{\pm}    \frac { { e  \left(   \pm  4 x^{1/4} \right)   }}  {x^{3/8} }   \sum_{k= 0}^{K-1} \frac {B^{\pm}_{k } }  {x^{  k/4  }}   +  O_{K, \phi}  \bigg( \frac 1 {x^{ K  / 4}} \bigg), \quad 
		& J_{\phi}  ( -x ) =O_{K, \phi}  \bigg( \frac 1 {x^{ K  / 4}} \bigg),
	\end{align}
	for  $x \Gt 1$. 
\end{lem}
\begin{proof}
	 By the Mellin--Barnes integral for the Bessel kernel $ J_{\phi} (  x  )  $ as above in Definition \ref{defn: Bessel}, along with the Stirling formula, one may readily deduce \eqref{5eq: bound x<1} from $|\mathrm{Re} (\lambda_l)| < 1/2$ (see Remark \ref{rem: Kim-Sarnak}). 
	 The reader is referred to \cite[\S 5.1]{Qi-Wilton} for the proof of $ J_{\phi} (x) \Lt_{  \phi}     1 / {\sqrt{|x|}} $.  
	 On the other hand,  the asymptotic and bound in \eqref{4eq: asymptotic, Bessel, R} are the $\GL_4 (\BR)$ special case of  \cite[Theorem  14.1]{Qi-Bessel}.  
\end{proof}

Moreover, for the derivatives of $ J_{\phi}  ( x )  $ similar asymptotics and bounds  as in \eqref{4eq: asymptotic, Bessel, R}  also hold (see \cite[Theorem 11.24]{Qi-Bessel}). Consequently, for all $x$ one has uniformly (but crudely) 
\begin{align}\label{5eq: x < 1, 2}
	x^{j}	J_{\phi}^{(j)} (x) \Lt_{j, \phi}   \frac {1 + |x|^{(2j+1)/8 }}  {\sqrt{|x|}} . 
\end{align}
Later, in practice, \eqref{4eq: asymptotic, Bessel, R}  will be applied with $x \Gt T^{\vepsilon}$, while \eqref{5eq: x < 1, 2} will be used to show that $ \sqrt{|x|} J_{\phi} (x) $ is just  $T^{\vepsilon}$-inert  for $x \Lt T^{\vepsilon}$ (see Definition \ref{defn: inert}). 

\begin{remark}
	Albeit not as explicit as a kernel function, the asymptotic expansion in {\rm\eqref{4eq: asymptotic, Bessel, R}} is visible in \cite[Lemma 5.2]{Chandee-Li-GL(4)-Special-Points} {\rm(}see also \cite[Lemma 6.1]{XLi} or \cite[Lemma 6]{Blomer}{\rm)}. Note that there are 3 different proofs of the asymptotic formula in \cite{Qi-Bessel}. For a comparison the interested reader is referred to \cite[Appendix B]{Qi-Bessel}.
\end{remark}

\begin{remark}\label{rem: Kim-Sarnak}
	Note that one may slightly improve {\rm\eqref{5eq: bound x<1}} by the Kim--Sarnak bound $ |\mathrm{Re} (\lambda_{l} )| \leqslant 1/2 - 1/11$ 
	{\rm(}see \cite[Appendix 2, Proposition 1]{Kim-Sarnak}{\rm)}. 
\end{remark}

\begin{remark}
	 Note that in the $\GL_3$ setting of \cite{Qi-GL(3)-Special-Points}, however, there is no need to treat the small-argument case $x \Lt T^{\vepsilon}$. 
\end{remark}

\section{Proof of Theorem \ref{thm: main}}

For $ T^{\vepsilon} \leqslant M \leqslant T^{1-\vepsilon}$, our aim is to prove  
\begin{align}\label{5eq: main bound}
	  \sum_{ T < t_j \leqslant T+M }       | L (s_j, \phi \times u_j)   |^2 \Lt_{\vepsilon, \phi}   \frac {T^{5+\vepsilon}} {M^{3}}.
\end{align}
This bound is optimal  when $M = T^{1-\vepsilon}$ and hence we arrive at \eqref{1eq: bound GL(4)} in Theorem \ref{thm: main}. 

\subsection{Initial Reductions}  
The Rankin--Selberg $L$-function $L (s, \phi \times u_j)$ is  defined by 
\begin{align*}
	L (s, \phi \times u_j) = \sum_{n_2    =1}^{\infty} \sum_{n    =1}^{\infty}\frac{A(1, n_2    ,n    )\lambda_j (n    )}{(n_2    ^2 n    )^s},  
\end{align*}
for $\mathrm{Re} (s) > 1$, and by analytic continuation to the entire complex plane.  See  \cite[\S 12.3]{Goldfeld}\footnote{Note that there are some typos in \cite[Definition 12.3.4]{Goldfeld}.}. 

Let $T < t_j \leqslant T+M$. By \cite[Theorem 5.3]{IK}, the approximate functional equation is given by
\begin{align*}
	\begin{aligned}
		L  (  1 / 2 + i t_j , \phi \times u_j  )      = &   \mathop{\sum \sum}_{ n_2  , n     }  \frac{A(1, n_2    ,n    )\lambda_j (n    )}{(n_2^2 n    )^{1/2+it_j} }   V_{\delta_j}    \big( n_2^2 n    ; 1/2+it_j \big)   \\
		& +  \epsilon_j (\phi)    \mathop{\sum \sum}_{ n_2, n     }  \frac{\overline{A(1, n_2    ,n    )}\lambda_j (n    )}{(n_2^2 n    )^{1/2-it_j} }   \widetilde{V}_{\delta_j}    \big( n_2^2 n    ; 1/2-it_j \big) ,  
	\end{aligned}
\end{align*}
where $\delta_j$ is the parity of $u_j (z)$ and $\epsilon_j(\phi)$ has unit norm,
\begin{align*}
	V_{\delta} (y; 1/2+it) =  \frac 1 {2   \pi i }   \int_{ (3)}   & G_{\delta}  (v, it;  \phi )   y^{ - v}   \frac {\nd v} {v}, 
\end{align*}
\begin{align*}
	G_{\delta}  (v, it; \phi   ) =  
	\frac { \gamma_{\delta} (1/2 + 2i t    + v, \phi  )} {  \gamma_{\delta} (1/2 + 2i t    , \phi  )} \frac {  \gamma_{\delta} (1/2     + v, \phi  )} {  \gamma_{\delta} (1/2     , \phi  )} \exp({v^2}) ,
\end{align*} 
with the $\gamma$-factor $\gamma_{\delta} (s, \phi)$ given as in Definition \ref{defn: Bessel}, while $\widetilde{V}_{\delta}(y; 1/2-it)$ is defined similarly with $\phi \ra  \widetilde{\phi}$. By \cite[Proposition 5.4]{IK}, the sums can be effectively restricted to the range $n_2^2 n \leqslant T^{2+\vepsilon}$ at the cost of a negligibly small error. To facilitate our analysis, we use the following expression due to Blomer \cite[Lemma 1]{Blomer} (slightly modified):
\begin{align*}
	V_{\delta} (y; 1/2+it) =  \frac 1 {2   \pi i }\int_{    \vepsilon - i  U }^{\vepsilon + i U } G_{\delta}  (v, it;  \phi )   y^{ - v}   \frac {\nd v} {v}+O_\vepsilon\bigg(\frac{T^\vepsilon}{y^\vepsilon\exp (U^2/2 )}\bigg).
\end{align*}
The error term is negligibly small if we choose $U = \log T$. Note that for any $v$ on the integration contour,   $$G_{\delta}  (v, it; \phi   ) =  O_{\vepsilon,\phi}(T^\vepsilon), $$ by the Stirling  formula, provided that $T < |t| \leqslant T+M$. 

Now, we apply a smooth dyadic partition to the $n$-sum and the Cauchy--Schwarz  inequality to pull out the $v$-integral, and it follows that up to a negligible error 
\begin{align}\label{2eq: AFE} 
	| L  ( s_j , \phi \times u_j  )   |^2 \Lt  {T^{\vepsilon}}     \max_{P \shskip \leqslant  T^{2+\vepsilon}}   \int_{    \vepsilon - i  \log T}^{\vepsilon + i \log T}   \big| S_j^v (P)  \big|^2     \nd v ,
\end{align}
where   $P$ are dyadic,  and  
\begin{align}\label{5eq: S(P)}
	S_j^v (P)  = \frac 1 {\sqrt{P}} \mathop{\sum\sum}_{ n_2,  n     }  \frac{A(1,n_2,n    )\lambda_j (n    )} {(n_2^2 n    )^{ i t_j} } \varww_{ v} \bigg(\frac {n_2^2 n    } {P} \bigg), \quad \ \	\varww_{ v} (x) =   \frac {\varvv (x)}   {x^{1/2+ v} }  , 
\end{align} 
for a certain fixed $\varvv (x) \in C_c^{\infty} [1 , {2}]$.  As $ v = O (\log T)$, it is clear that $\varww_{ v} (x)$ is $\log T$-inert according to Definition \ref{defn: inert}; namely, $ \varww_{v}^{(j)} (x) \Lt_{j} \log^j T$. Further, we use the Cauchy inequality to pull out the $n_2$-sum, obtaining 
\begin{align}\label{5eq: S (N, uj)}
	\big|S_j^v (P)\big|^2  \Lt T^{\vepsilon}  \sum_{n_2 \Lt \sqrt{  P}} \frac 1 {n_2} \bigg|\frac {n_2} {\sqrt{P}} \sum_{n}  {A(1,n_2    , n)\lambda_j (n)}{n^{- i t_j} } \varww_{ v} \bigg(\frac {n} {P /n_2^2}  \bigg)\bigg|^2.  
\end{align} 

By the discussion above, the problem is reduced to proving the following lemma. 

\begin{lem}\label{lem: S}
	For $  N          n_2^2 \Lt T^{2+\vepsilon} $,  define
	\begin{align*}
		S  (n_2; N)    = \sum_{T < t_j \leqslant T+M}     \bigg|\frac { 1    } {\sqrt{N}} \sum_{n}  {A(1,n_2    , n)\lambda_j (n)}{n^{- i t_j} } \varww  \Big(\frac {n} {N}  \Big) \bigg|^2, 
	\end{align*}
	where $ \varww \in C_c^{\infty} [1, 2]$ is $\log T$-inert in the sense of Definition {\ref{defn: inert}}.  
	Then 
	\begin{align*}
		S  (n_2; N)   \Lt     \frac {M T   } {N}  T^{\vepsilon}   \sum_{ n \sim N}   |A (1,n_2, n)|^2   + \bigg(1+ \frac{NT}{M^3}\bigg) N n_2^{2} T^\vepsilon . 
	\end{align*}
\end{lem}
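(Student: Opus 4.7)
The natural starting point is to apply Theorem \ref{thm: LS GL(2)} with $a_n = A(1, n_2, n) \varww(n/N)/\sqrt{N}$. The diagonal contribution $\breve{D}(\mathcal{A}) = MT \sum_n |a_n|^2$ immediately gives $(MT/N) T^{\vepsilon} \sum_{n \sim N} |A(1,n_2,n)|^2$, matching the first term on the right-hand side of the claimed inequality, while $\breve{E}(\mathcal{A})$ is negligibly small on taking $A$ sufficiently large. The entire content of the lemma therefore lies in the estimation of the oscillatory term $\breve{P}(\mathcal{A})$, which one hopes to match against the remaining $(1 + NT/M^3) N n_2^2 T^{\vepsilon}$.

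To handle $\breve{P}$ the central move is to apply the $\SL_4(\BZ)$ Vorono\"i summation formula (Lemma \ref{lem: Voronoi}) with $q_1 = n_2$ and $q_2 = 1$ to the inner sum $\sum_n A(1, n_2, n) e(an/c) \omega(n)$, where $\omega(x) = \varww(x/N) e(xt/(cq))/\sqrt{N}$. This dualises the sum into one over a new variable $n_*$, weighted by Kloosterman sums $\mathrm{Kl}_2(a, \mp n_*, c; n_2, 1, d_1, d_2)$ and Bessel transforms $\Omega(\pm d_1^3 d_2^2 n_*/(c^4 n_2^2))$, indexed by divisors $d_1 \mid c n_2$ and $d_2 \mid c n_2/d_1$.

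The Bessel transform $\Omega(Y)$ must now be analyzed. After rescaling $x = Nu$, the phase of the integrand reads $Nut/(cq) \pm 4(NY)^{1/4} u^{1/4}$ upon inserting the asymptotic \eqref{4eq: asymptotic, Bessel, R} of $J_\phi$; the crude bound \eqref{5eq: x < 1, 2} covers the regime $|NY| \Lt T^{\vepsilon}$. This phase has exactly the shape $\lambda(u \pm \gamma u^{1/\gamma})$ of Lemma \ref{lem: analysis of integral} with $\gamma = 4$ and $\lambda = Nt/(cq)$. Part (i) of that lemma forces $\Omega(\pm Y)$ to be negligible outside the resonant window $Y \asymp N^3 t^4/(cq)^4$, i.e.\ $n_* \asymp n_2^2 N^3 t^4/(d_1^3 d_2^2 q^4)$, while part (ii) realises $\sqrt{\lambda}\, \Omega(Y)$ as an $X$-inert function times $e(\lambda(\gamma-1))$, thus isolating a clean $n_*^{1/4}$-oscillation.

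With this substitution, $\breve{P}$ reduces to a sum of the exact shape controlled by Young's hybrid large sieve (Lemma \ref{lem: Young's LS}) applied in the dual variable $n_*$ with $\gamma = 1/4$, over the $a \, (\mathrm{mod}\, c)$-average and the $t$-integral. Feeding the output into the pointwise bound of Lemma \ref{lem: bound for A} and the averaged Ramanujan bound of Lemma \ref{lem: Ramanujan, 2} for $|A(n_*, d_2, d_1)|^2$, and then summing over $c, q, d_1, d_2$, should produce the desired bound. I expect the main difficulty to lie in the bookkeeping of the various divisor sums together with the resonance window so that the final estimate collapses into the stated $(1 + NT/M^3) N n_2^2 T^{\vepsilon}$; the factor $NT/M^3$ should emerge from the ratio of the resonant dual-range length to the $t$-integration length $M^{\vepsilon}/M$, with the $1$ coming from the Young--LS diagonal in the new variable.
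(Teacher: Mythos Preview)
Your overall strategy---Theorem~\ref{thm: LS GL(2)}, then Vorono\"i on $\breve P$, then stationary phase on the Hankel transform, then a large sieve on the dual side---is exactly the paper's. Two points, however, need correction.

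First, the stationary-phase output. To put the phase $Nur \pm 4(NY)^{1/4}u^{1/4}$ (with $r=t/(cq)$) into the shape $\lambda(u \pm 4u^{1/4})$ of Lemma~\ref{lem: analysis of integral} one must first rescale $u$; after the rescaling one finds $\lambda = (Y/r)^{1/3}$, \emph{not} $Nt/(cq)$. Since $Y \propto n_*$, the residual phase $e((\gamma-1)\lambda)=e(3\lambda)$ carries an $n_*^{1/3}$-oscillation, not $n_*^{1/4}$, and Young's hybrid large sieve must be applied with $\gamma = 1/3$ (as the paper does), not $\gamma = 1/4$. With $\gamma=1/4$ the bound in Lemma~\ref{lem: Young's LS} would not match the actual exponential sum.

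Second, and more structurally, you skip the step that actually puts the dual sum into large-sieve shape. After Vorono\"i the arithmetic weight on $n_*$ is the hyper-Kloosterman sum $\mathrm{Kl}_2(a,\mp n_*,c;n_2,1,d_1,d_2)$, not a bare additive character $e(an_*/c)$, so Lemma~\ref{lem: Young's LS} does \emph{not} apply directly ``over the $a\ (\mathrm{mod}\ c)$-average''. The paper devotes an entire subsection to this: one opens the square in the $a$-sum, uses orthogonality twice (together with AM--GM) to collapse the $\mathrm{Kl}_2$ structure down to an ordinary Kloosterman sum and then to a single additive character $e(bn_*/h)$ with the \emph{new} modulus $h=cn_2/(d_1d_2)$; only after this reduction and the passage to the variable $h$ is the large sieve applicable. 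Relatedly, the small-argument regime $NY \Lt T^{\vepsilon}$ is not disposed of by the pointwise bound~\eqref{5eq: x < 1, 2} alone: the paper treats it separately via the \emph{classical} large sieve (Lemma~\ref{lem: LS}), and this contributes to the ``$1$'' in $(1+NT/M^3)$ alongside the $\tau C$ term from Young's sieve.
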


Now the bound in \eqref{5eq: main bound} follows from \eqref{2eq: AFE}, \eqref{5eq: S(P)}, \eqref{5eq: S (N, uj)}, and Lemma \ref{lem: S}, since, by the averaged Ramanujan bound in Lemma \ref{lem: Ramanujan, 1},  for any $P \leqslant T^{2+\vepsilon}$, 
\begin{align*}
\sum_{n_2 \Lt \sqrt{  P}} 	\frac {S \big(n_2; P /n_2^2 \big)} {n_2}  \Lt T^{\vepsilon} \bigg( M T +   P + \frac {P^2 T  } {M^3} \bigg) \Lt  \frac {T^{5+\vepsilon}} {M^{3}} .
\end{align*}

\subsection{Application of Theorem \ref{thm: LS GL(2)}} The rest of this section is devoted to the proof of  Lemma  \ref{lem: S}. First of all, we apply Theorem \ref{thm: LS GL(2)} with
\begin{align*}
	\overline{a}_n  = \frac {1 } {\sqrt{N}}  A(1,n_2, n) \varww  \Big(\frac {n} {N}  \Big),
\end{align*} 
so that, up to a negligible error, we have
\begin{align}
	S  (n_2; N)   \Lt T^{\vepsilon} \big(\breve{D}  (n_2; N) + \breve{P}  (n_2; N)\big) , 
\end{align}
where 
 \begin{align}\label{5eq: P(n; N)}
	\breve{D}  (n_2; N) = \frac {M T   } {N}   \sum_{  n \sim  N} |A (1,n_2, n)|^2  , 
\end{align}
\begin{align} \label{5eq: P(N), 0}
	\breve{P}  (n_2; N)  = \frac{M T} {N}  \sum_{q \Lt N/T}  \frac 1 {q} \int_{-M^{\vepsilon}/ M}^{M^{\vepsilon}/ M}  \sum_{c \Lt N/ T q} \frac 1 {c}  \, \sumx_{   a            (\mathrm{mod} \, c) } \big|  P_{a      } (t/q;  c, n_2; N)  \big|^2   \nd t ,  
\end{align} 
with
\begin{align}\label{5eq: P (t/q)}
	P_{a      } (t/q;  c, n_2; N)  =	 \sum_{ n }  A(1,n_2, n)   e \Big(   \frac {\widebar{a      }     n} {c} \Big)  e \bigg( \frac {n t} {c q} \bigg) \varww  \Big(\frac {n} {N}  \Big) . 
\end{align}
For convenience, let us  truncate  the $t$-integral at $ |t| = 1 / N T $ and then apply a dyadic partition for $ 1/ N T < |t| \leqslant M^{\vepsilon} / M $; by trivial estimation, the resulting error is dominated by $ T^{\vepsilon} \breve{D}  (n_2; N)  $. 
Accordingly, for $ 1/N T \Lt \tau \Lt M^{\vepsilon}/ M$ let us consider
\begin{align} \label{5eq: P(N)}
	\breve{P}_{+}  (\tau; n_2; N)  = \frac{M T} {N}  \sum_{q \Lt N/T}  \frac 1 {q} \int_{\tau }^{2 \tau}   \sum_{c \Lt N/ T q} \frac 1 {c}  \, \sumx_{   a            (\mathrm{mod} \, c) } \big|  P_{a      } (t/q;  c, n_2; N)  \big|^2   \nd t;
\end{align} 
in the same way, we may define and analyze $\breve{P}_{-}  (\tau; n_2; N) $. 

\subsection{Application of the Vorono\"i Summation Formula}  
By applying the Vorono\"i summation formula in Lemma \ref{lem: Voronoi}, the sum $P_{a      } (t/q;  c, n_2; N) $ in \eqref{5eq: P (t/q)} is transformed into 
\begin{equation}\label{5eq: after Voronoi}
	\begin{split}
	\frac 1 {c^3n_2^2}	\sum_{\pm}   \mathop{\sum \sum}_{d_1 d_2 \mid cn_2   }  \!  d_1^2d_2    \sum_{n   }  A(n , d_2,d_1)     {\mathrm{Kl}_2  ({a      }, \mp n, c; n_2,1,d_1,d_2 ) }   \Omega_N^{\pm} \bigg(   \frac{d_1^3d_2^2 n} {c^4n_2^2},\frac{t}{cq}\bigg),
	\end{split}
\end{equation} 
where  
\begin{align}\label{5eq: Omega}
	\Omega_N^{\pm} ( y, r )  = \int J_{\phi} ( \pm x y) e  (   x r  ) \varww  \Big(\frac {x} {N}  \Big) \nd x  . 
\end{align} 
Let us  insert \eqref{5eq: after Voronoi} into \eqref{5eq: P(N)}, drop the $\star$ on the $a$-sum by positivity,   and then pull out the $\pm$- and $(d_1, d_2)$-sums by Cauchy. It follows that $	\breve{P}_{+}  (\tau; n_2; N) $ is bounded by
\begin{align*}
		\frac{M T^{1+\vepsilon}  } {N n_2^4 } \sum_{\pm}  \!  \sum_{q \Lt N/T} \frac1q   \int_{\tau}^{2\tau } \!    \sum_{c \Lt N/ T q}   \!  \frac {1} {c^7}   \!  \mathop{\sum\sum}_{d_1 d_2 \mid c n_2   }  \!   d_1^4d_2^2  \!   \sum_{   a            (\mathrm{mod} \, c) }  \!  \big| \widetilde{P}_a^{\pm} (t/q; c, n_2; d_1, d_2; N) \big|^2   \nd t , 
\end{align*}
if we denote the inner dual $n$-sum in \eqref{5eq: after Voronoi} by $\widetilde{P}_a^{\pm} (t/q; c, n_2; d_1, d_2; N) $.  

For simplicity, let us suppress $\tau$, $n_2$, $N$ from the notation and  consider only the $+$ case; it is much easier to treat the $-$ case, since the Bessel kernel $ J_{\phi} (-x) $ is of rapid decay for $x$ large (see \eqref{4eq: asymptotic, Bessel, R}). So it is left to estimate 
\begin{align}\label{5eq: P++}
	\breve{P}_{\, +}^{+}  =  \frac{M T   } {N n_2^4 }    \sum_{q \Lt N/T} \frac1q   \int_{\tau}^{2\tau } \! \sum_{c \Lt N/ T q}   \frac {1} {c^7}   \!  \mathop{\sum\sum}_{d_1 d_2 \mid c n_2   }   d_1^4d_2^2  \sum_{   a            (\mathrm{mod} \, c) }  \big| \widetilde{P}_a^{+}  (t/q; c  ; d_1, d_2) \big|^2   \nd t, 
\end{align}
with
\begin{align}
	 \widetilde{P}_a^{+}  (t/q; c  ; d_1, d_2) =  \sum_{n   }  A(n , d_2,d_1)     {\mathrm{Kl}_2  ({a      }, - n, c; n_2,   1,d_1,d_2 ) }   \Omega_N^{+} \bigg(  \frac{d_1^3d_2^2 n} {c^4n_2^2};\frac{t}{cq}\bigg) . 
\end{align}

\subsection{Simplification of Exponential Sums}  

After opening the square in the $a$-sum above,  we obtain the exponential sum (see Definition \ref{defn: Kloosterman})
\begin{align*}
	&\sum_{a (\mathrm{mod} \, c) }\mathrm{Kl}_2(a,-m,c;n_2,1,d_1,d_2)
	\overline{\mathrm{Kl}_2(a,-n,c;n_2,1,d_1,d_2)}\\
	=&\sum_{a (\mathrm{mod} \, c) }\mathop{\mathop{\sumx \sumx\sumx\sumx}_{u_1,v_1(\mathrm{mod} \, 
			cn_2/d_1)}}_{u_2,v_2(\mathrm{mod} \, cn_2/d_1d_2)} e \bigg( \frac{ a d_1 (u_1-v_1) }{c} + \frac{u_2\widebar{u}_1-v_2\widebar{v}_1}{cn_2/d_1d_2} - \frac{m\widebar{u}_2-n\widebar{v}_2}{cn_2/d_1d_2}\bigg).
\end{align*}
By orthogonality, the $a$-sum yields the congruence condition $ d_1 (u_1- v_1) \equiv 0 \,(\mathrm{mod}\, c) $, or equivalently $ u_1 \equiv v_1 \,(\mathrm{mod}\, c/ (c, d_1)) $. For brevity, set 
$$   c' = \frac {c} {(c, d_1)}, \qquad n_2' = \frac {n_2} {d_1 / (c, d_1)}.   $$  
Thus we may write $ v_1 = u_1 + c' w $ for $ w\, (\mathrm{mod} \, n_2')$ such that $( u_1 + c' w,  n_2') = 1$, so   the whole $a$-sum in \eqref{5eq: P++} is transformed into
\begin{equation}\label{5eq: sum of S}
	\begin{split}
		c  \sumx_{u_1 (\mathrm{mod}\, c' n_2')}   \mathop{\sum_{w (\mathrm{mod} \, n_2')} }_{(u_1+c'w, n_2') = 1 }  \!\! S_{\widebar{u}_1}^{+}  (t/q; c; d_1,d_2 )  \overline{S_{\,\overline{\!u_1+c' w\!}\,}^{+} (t/q;  c; d_1,d_2) }     ,
	\end{split}
\end{equation} 
where 
\begin{align}   \label{5eq: Sa}
S^{+}_{a}  (t/q; c;  d_1,d_2 ) = \sum_{n }    A(n,d_2 , d_1)  \mathrm{S} (a, - n; c n_2 /d_1 d_2)    \Omega_{N}^{+} \bigg(\frac{d_1^3d_2^2 n} {c^4n_2^2}, \frac {t} {cq} \bigg),
\end{align}
and $\mathrm{S} (m, n; c)$ is the usual Kloosterman sum 
\begin{align*}
	\mathrm{S} (m, n; c) =	\sumx_{v (\mathrm{mod}\, c )} e \bigg(   \frac { m v   +  n \widebar{v} } {c } \bigg). 
\end{align*}
By applying the AM--GM inequality to the $S$-product as in \eqref{5eq: sum of S}, we obtain (half of) the sum of 
\begin{align*}
	c \,  \sumx_{u  (\mathrm{mod}\, c' n_2')}   \mathop{\sum_{w (\mathrm{mod} \, n_2')} }_{(u +c'w, n_2') = 1 }  \big| S_{\widebar{u} }^{+}  (t/q; c; d_1,d_2 ) \big|^2  
\end{align*}
and 
\begin{align*} 
	c \,  \sumx_{u_1 (\mathrm{mod}\, c' n_2')}   \mathop{\sum_{w (\mathrm{mod} \, n_2')} }_{(u_1+c'w, n_2') = 1 }  \big| S_{\, \overline{\! u_1+c' w \!}\, }^{+} (t/q;  c; d_1,d_2) \big|^2 ,
\end{align*} 
whereas, by the change $u = u_1+c'w $, the second sum may be rewritten as 
\begin{align*} 
	c \,  \sumx_{u (\mathrm{mod}\, c' n_2')}   \mathop{\sum_{w (\mathrm{mod} \, n_2')} }_{(u -c'w, n_2') = 1 }  \big| S_{\widebar{u} }^{+}  (t/q; c; d_1,d_2 ) \big|^2 .
\end{align*}
Next, we drop the coprimality conditions $(u \pm c' w, n_2') = 1$, make the substitution $ a = \widebar{u}$, and remove $\star$ on the $a$-sum,  then we may bound \eqref{5eq: sum of S} by  
\begin{align}\label{5eq: a-sum}
	cn_2  \, \sum_{a (\mathrm{mod}\, cn_2/d_1 )}     \big| S_{ a }^{+}  (t/q; c; d_1,d_2 ) \big|^2 .
\end{align} 
Recall here that $ c' n_2' = cn_2/d_1$. 
Similar to the above, if we open the square by \eqref{5eq: Sa}, then the resulting exponential sum reads 
\begin{align*}
\sum_{a (\mathrm{mod}\, c n_2 /d_1  )} & \mathrm{S} (a, - m; c n_2 /d_1 d_2) \overline{\mathrm{S} (a, - n; c n_2 /d_1 d_2)} 	\\
& =  \sum_{a (\mathrm{mod}\, c n_2 /d_1 )}\,\mathop{{\sumx\sumx}}_{u, v (\mathrm{mod}\, c n_2 /d_1 d_2  )} e \bigg(   \frac { a  (u -  v  ) - (\widebar{u}  m -\widebar{v}  n) } {c n_2 /d_1 d_2 }   \bigg) ,
\end{align*}
while, by orthogonality, the $a$-sum yields the congruence $ u \equiv v  \,(\mathrm{mod}\, c n_2 /d_1 d_2) $, so the whole $a$-sum in \eqref{5eq: a-sum} is simplified into 
\begin{align*}
	\frac{c^2n_2^2}{d_1} \, \sumx_{b (\mathrm{mod} \, cn_2/d_1d_2)}    \bigg|  \sum_{n}    A(n,d_2 , d_1)   e \bigg(   \frac {b n } {c n_2/d_1d_2} \bigg)   \Omega_{N}^{+} \bigg(\frac{d_1^3d_2^2 n} {c^4n_2^2}, \frac {t} {cq} \bigg)\bigg|^2.
\end{align*}
In conclusion, for  $ \breve{P}_{\, +}^{+}  $ as in \eqref{5eq: P++}, we have the bound
\begin{align}\label{5eq: bound for P++, 1}
\begin{aligned}
		\breve{P}_{\, +}^{+} \Lt &  \frac{M T } {N n_2^2}     \sum_{q \Lt N/T} \frac1q   \int_{\tau}^{2\tau}\! \sum_{c \Lt N/ T q}  \frac{1}{c^5}  \mathop{\sum\sum}_{d_1d_2| cn_2}   d_1^3d_2^2 \\ & \quad \cdot 
	\sumx_{b (\mathrm{mod} \, cn_2/d_1d_2)}    \bigg| \sum_n    A(n,d_2 , d_1)   e \bigg(   \frac {b n } {c n_2/d_1d_2} \bigg)   \Omega_{N}^{+}\bigg(\frac{d_1^3d_2^2 n} {c^4n_2^2}, \frac {t} {cq} \bigg)\bigg|^2 \nd t . 
\end{aligned}
\end{align}

\subsection{Further Reductions} \label{sec: further reduction} For the analysis of Hankel transform and the application of large sieve, it will be convenient to introduce the new variable  $h = c n_2/d_1d_2$, along with a dyadic partition to the $h$-sum. It suffices to consider 
\begin{align}\label{5eq: P(H)}
	 \breve{P}_{\, +}^{+} (H) = \frac{M T  }  {N   }   \mathop{\sum \sum \sum }_{ d_1d_2q \Lt Nn_2/H T  } \frac {\breve{P}_{\, +}^{+} (d_1, d_2, q; H)} {q} ,   
\end{align}
for  dyadic 
$	H \Lt   {Nn_2} / {T } $,  
where  
\begin{align}
\breve{P}_{\, +}^{+} (d_1, d_2, q; H) = \! \int_{\tau}^{2\tau} \! \sum_{h \sim H}   \frac{1}{c   \gamma} \,   \sumx_{b (\mathrm{mod} \, h)}    \bigg|  \sum_{n}    A(n,d_2, d_1)   e \bigg(   \frac {b n} {h} \bigg)   \Omega_{N}^{+} \bigg(\frac{n} {\gamma}, \frac {t} { c q} \bigg)\bigg|^2\nd t,
\end{align}
\begin{align} \label{5eq: c gamma}
  c = \frac {d_1 d_2 h } {n_2}, \qquad \gamma = \frac { d_1d_2^2 h^4 } { n_2^2 } . 
\end{align}
Moreover, let us set
\begin{align}\label{5eq: Ns}
	N_{\flat} = \frac {T^{\vepsilon} H^4 d_1d_2^2} {Nn_2^2}, \qquad N_{\natural} = \frac {N^3 n_2^2 \tau^4}  {d_1^3 d_2^2 q^4}. 
\end{align}  
For the Hankel transform $\Omega_{N}^{+}  ( {n} / {\gamma},  {t} / { c q} )$ as in \eqref{5eq: Omega}, the  Bessel kernel is oscillatory  if $ n \Gt N_{\flat} $ (so that $ N n / \gamma \Gt T^{\vepsilon}$), in which case Lemma \ref{lem: analysis of integral} (i) will be applied in \S \ref{sec: Hankel II} to 
show that  $\Omega_{N}^{+}  ( {n} / {\gamma},  {t} / { c q} )$ is negligibly small unless $ n \asymp N_{\natural}$. 

\begin{lem}\label{lem: range of Hankel}
	 In the case $ n \Gt N_{\flat} $, the Hankel transform $\Omega_{N}^{+}  ( {n} / {\gamma},  {t} / { c q} )$ is negligibly small unless $ n \asymp N_{\natural}$. 
\end{lem}

Accordingly,   define
\begin{align}
\label{5eq: P flat}	\breve{P}_{\flat} (d_1, d_2, q; H) & = \!  \int_{\tau}^{2\tau} \! \sum_{h \sim H}   \frac{1}{c   \gamma} \,   \sumx_{b (\mathrm{mod} \, h)}   \bigg|    \sum_{n \Lt N_{\flat}}    \! A(n,d_2, d_1)   e \bigg(   \frac {b n} {h} \bigg)   \Omega_{N}^{+} \bigg(\frac{n} {\gamma}, \frac {t} { c q} \bigg)\bigg|^2\nd t, \\
\label{5eq: P natural}	\breve{P}_{\natural} (d_1, d_2, q; H) & = \!  \int_{\tau}^{2\tau} \! \sum_{h \sim H}   \frac{1}{c   \gamma} \,   \sumx_{b (\mathrm{mod} \, h)}    \bigg|   \sum_{n \asymp N_{\natural} }   \!  A(n,d_2, d_1)   e \bigg(   \frac {b n} {h} \bigg)   \Omega_{N}^{+} \bigg(\frac{n} {\gamma}, \frac {t} { c q} \bigg)\bigg|^2\nd t . 
\end{align}
By Cauchy, it is now reduced to proving the following bounds, which will be established at the end of \S \S \ref{sec: large sieve, 1} and \ref{sec: large sieve, 2}. 

\begin{lem}\label{lem: bounds for P}
Let $d_1d_2q \Lt Nn_2/H T$.	 We have 
	  \begin{align}\label{5eq: P flat, bound}
	  	\breve{P}_{\flat} (d_1, d_2, q; H)    \Lt T^{\vepsilon}  \frac {N H} {M}    \frac {n_2} {   d_1^{2/11} d_2^{2/37}   }  , 
  	\end{align}
	  \begin{align}
	  		\label{5eq: P natural, bound}
	  	\breve{P}_{\natural} (d_1, d_2, q; H)    \Lt T^{\vepsilon}   \frac {NH} {M} \frac {   n_2 } {   d_1^{2/11} d_2^{2/37}   } + T^{\vepsilon} \frac {N^3} {M^4} \frac { n_2^2} {  d_1^{24/11} d_2^{39/37} q^3 }    .
	  \end{align}
\end{lem}

By \eqref{5eq: P(H)}, \eqref{5eq: P flat, bound}, and \eqref{5eq: P natural, bound}, we infer that $\breve{P}_{\, +}^{+} (H) $ is bounded by the sum of 
\begin{align*}
	T^{\vepsilon}  \frac {N H  } {M} \frac{M T  }  {N   }  \mathop{\sum \sum \sum }_{ d_1d_2q \Lt Nn_2/H T  }  \frac { n_2 } {   d_1^{2/11} d_2^{2/37} q  } \Lt T^{\vepsilon}  \frac {N H   } {M} \frac{M T  }  {N   } \frac {N n_2^2} {HT} = T^{\vepsilon} N n_2^2 ,
\end{align*}
and
\begin{align*}
	T^{\vepsilon} \frac {N^3} {M^4} \frac{M T  }  {N   }  \mathop{\sum \sum \sum }_{ d_1d_2q \Lt Nn_2/H T  } \frac { n_2^2} {  d_1^{24/11} d_2^{39/37} q^4 } \Lt T^{\vepsilon} \frac {T N^2 n_2^2} {M^3}, 
\end{align*}
as desired in Lemma \ref{lem: S}.  

\subsection{Analysis of the Hankel Transform I: the Case $ \boldsymbol{N y \Lt T^{\vepsilon}}$}\label{sec: Hankel, 1}

Recall from \eqref{5eq: Omega} that 
\begin{align*}
	\Omega_N^{+} ( y, r )  = N \int_1^2   J_{\phi} ( N x y) e  ( N  x r  ) \varww   (x) \nd x  . 
\end{align*}
Let us first consider the easier case when $ N y \Lt T^{\vepsilon}$. 
Recall that $\varww (x)  $ is $\log T$-inert, whereas $ \sqrt{Ny} \,  J_{\phi} ( N x y) $ is $T^{\vepsilon}$-inert in view of \eqref{5eq: x < 1, 2}, so $ \Omega_N^{+} ( y, r )  $ is negligibly small in the case $ N r \Gt T^{\vepsilon} $ (consider it as a Fourier integral). 
Write 
\begin{align}\label{5eq: Omega, flat}
\Omega_N^{+} ( y, r ) = \sqrt{N / y}\,	\varvv_{\flat} (y, r)  . 
\end{align}For $ N y, N r \Lt T^{\vepsilon}$, it follows by trivial estimation that $ \varvv_{\flat} (y, r)  $ is a $T^{\vepsilon}$-inert function as both $ \sqrt{Ny} \,  J_{\phi} ( N x y) $ and $e (Nx r)$ are now $T^{\vepsilon}$-inert. 

\subsection{Application of the Classical Large Sieve}\label{sec: large sieve, 1}

Now we   prove the bound for $\breve{P}_{\flat} (d_1, d_2, q; H) $ in \eqref{5eq: P flat, bound} in Lemma \ref{lem: bounds for P}.  

Note that $ N y \Lt T^{\vepsilon} $ amounts to $ n \Lt N_{\flat} $ for $ y = n / \gamma$ (see \eqref{5eq: c gamma} and \eqref{5eq: Ns}).  Now $\breve{P}_{\flat} (d_1, d_2, q; H)$ in \eqref{5eq: P flat} may be rewritten by \eqref{5eq: Omega, flat} as
\begin{align*}
 {N}	\int_{\tau}^{2\tau} \! \sum_{h \sim H}   \frac{1}{c   } \,   \sumx_{b (\mathrm{mod} \, h)}    \bigg|  \sum_{n \Lt N_{\flat}}    \! \frac {A(n,d_2, d_1)} {\sqrt{n}}   e \bigg(   \frac {b n} {h} \bigg)   \varvv_{\flat} \bigg(\frac{n} {\gamma}, \frac {t} { c q} \bigg)\bigg|^2\nd t . 
\end{align*}
For the  application of the classical large sieve (Lemma \ref{lem: LS}), one may readily dismiss the $T^{\vepsilon}$-inert weight $ \varvv_{\flat}  ( {n} / {\gamma},   {t} / { c q}  )$ by the standard technique using Mellin inversion and Cauchy--Schwarz  
  at the cost of only $T^{\vepsilon}$. To be explicit, if we apply \eqref{2eq: Mellin} (see Remark \ref{rem: Mellin}), then
  \begin{align*}
  	\varvv_{\flat} \bigg(\frac{n} {\gamma}, \frac {t} { c q} \bigg) = \frac 1 {4\pi^2}   \int_{-T^{\vepsilon}  }^{T^{\vepsilon} } \! \int_{-T^{\vepsilon}  }^{T^{\vepsilon} }   \widetilde{\varvv}_{\flat} (i {r}, i s )    \bigg(\frac {n} {\gamma}  \bigg)^{- i  {r}} \bigg(\frac {t} { c q}\bigg)^{-i s}   \nd  {r} \nd s + O_A \big(T^{-A} \big),
  \end{align*}
so, by Cauchy-Schwarz, we infer that, up to a negligible error, 
\begin{align*}
\breve{P}_{\flat} (d_1, d_2, q; H) \Lt 	T^{\vepsilon} N \int_{-T^{\vepsilon}  }^{T^{\vepsilon} } \!	\int_{\tau}^{2\tau} \!  \sum_{h \sim H}   \frac{1}{c   } \,   \sumx_{b (\mathrm{mod} \, h)}    \bigg|  \sum_{n \Lt N_{\flat}}    \! \frac {A(n,d_2, d_1)} {{n}^{1/2+ir} }   e \bigg(   \frac {b n} {h} \bigg)   \bigg|^2\nd t \, \nd r. 
\end{align*}
Note that the bi-variable Mellin inversion is indeed needed since $ h $ is contained in both $\gamma$ and $c$ (see \eqref{5eq: c gamma}), but the $s$-integral is gone after Cauchy-Schwarz since $n$ is not involved. 
By applying Lemma \ref{lem: LS} with $C = O (H)$ and $ N = O (N_{\flat})$,  
\begin{align*}
	 \breve{P}_{\flat} (d_1, d_2, q; H) & \Lt T^{\vepsilon}  {N} \tau  \frac {n_2} {H d_1 d_2   }   \big(H^2 + N_{\flat} \big) \sum_{n \Lt N_{\flat}} \frac {|A(n, d_2, d_1)|^2 } {n} . 
\end{align*}
Recall that $\tau \Lt M^{\vepsilon}/ M$. It follows from $ d_1 d_2 \Lt Nn_2/H T $ and $ N \Lt T^{2+\vepsilon} $ that
\begin{align*}
	N_{\flat} = \frac {T^{\vepsilon} H^4 d_1d_2^2} {Nn_2^2} \Lt \frac {T^{\vepsilon} H^2 N  } {T^2} \Lt T^{\vepsilon} H^2  . 
\end{align*}
Therefore \eqref{5eq: P flat, bound} is now a direct consequence of Lemma \ref{lem: Ramanujan, 2}.

 \subsection{Analysis of the Hankel Transform II:  the Case $ \boldsymbol{N y \Gt T^{\vepsilon}}$} \label{sec: Hankel II}
 Recall from \eqref{5eq: Omega} that 
 \begin{align*} 
 	\Omega_N^{+} ( y, r )  = \int J_{\phi} (   x y) e  (   x r  ) \varww  \Big(\frac {x} {N}  \Big) \nd x  . 
 \end{align*} 
 For the case $ N y \Gt T^{\vepsilon}$, one can apply  the asymptotic expansion  for $J_{\phi} (  x y)$  as in \eqref{4eq: asymptotic, Bessel, R} effectively with a negligibly small error term (choose $K = \lfloor 4 A /\vepsilon \rfloor$ + 1, say). By inserting the asymptotic formula, we infer that  up to a negligible error $ \Omega_N^{+} ( y, r )  $ splits into the sum of 
 \begin{align*}
 	\Omega_N^{+ \pm} ( y, r ) =  \frac {1 } { y^{3/8}}\int   e \big( x r \pm  {4 (x y)^{1/4} }  \big) \varww^{_{\pm}}_{\phi}  \Big(\frac {x} {N}  \Big) \frac {\nd x } { x^{3/8} } , 
 \end{align*}
for  some $\log T$-inert functions  $ \varww^{_{\pm}}_{\phi}  \in C_c^{\infty} [1, 2] $. Next, we make   the change $  x  \ra x y^{1/3}/r^{4/3}$ so that 
\begin{align*} 
	\Omega_{N}^{+\pm} (  y, r ) =  \frac{1}{y^{1/6}r^{5/6}} \int   e \big( (y /r)^{1/3}  (x \pm  4 x^{1/4}   )  \big) \varww^{_{\pm}}_{\phi}  \bigg(\frac {x   } {Nr^{4/3}/y^{1/3}}  \bigg) \frac {\nd x } { x^{3/8} } . 
\end{align*}
By applying Lemma \ref{lem: analysis of integral} with $\gamma =4$,  $\lambda = (y /r)^{1/3}$, $ \rho = Nr^{4/3}/ y^{1/3}$, and $X = \log T$, we infer that $\Omega_{N}^{++} (  y, r )$ is always negligibly small and    $\Omega_{N}^{+-} (  y, r )$ is so except for $ y \asymp N^3 r^4  $ (as claimed in Lemma \ref{lem: range of Hankel}), 
 in which case  
\begin{align} \label{5eq: Omega+-}
	\Omega_{N}^{+-} (y, r ) =  \frac {e    ( - 3 (y /r)^{1/3}  )  \varvv_{\natural}  (  y, r  )  } {  N r^2 }  ,
\end{align} 
for a certain  $\log T$-inert  function $\varvv_{\natural}  ( y, r  ) $.   
 
 \subsection{Application of the Hybrid Large Sieve of Young}\label{sec: large sieve, 2} 
 
 In this sub-section, we verify the bound for
$ \breve{P}_{\natural} (d_1, d_2, q; H)  $ in \eqref{5eq: P natural, bound} and finish the proof of Lemma \ref{lem: bounds for P}. 
 
 Note that the condition $ y \asymp N^3 r^4 $ amounts to $ n \asymp N_{\natural} $ for  $ y = n / \gamma$ (see \eqref{5eq: c gamma} and \eqref{5eq: Ns}). Thus, up to a negligible error,  $\breve{P}_{\natural} (d_1, d_2, q; H)$ in \eqref{5eq: P natural} may be rewritten by \eqref{5eq: Omega+-} as
\begin{align*}
\frac 1 {N^2}	 \int_{\tau}^{2\tau} \! \sum_{h \sim H}   \frac{c^3 q^4}{  \gamma} \,   \sumx_{b (\mathrm{mod} \, h)}    \bigg|  \sum_{n \asymp N_{\natural} }   \!  A(n,d_2, d_1)   e \bigg(   \frac {b n} {h} \bigg) e \bigg(\! - 3 \sqrt[3]{\frac { c q n} {\gamma t }} \bigg)  \varvv_{\natural}  \bigg(\frac{n} {\gamma}, \frac {t} { c q} \bigg)\bigg|^2\frac {\nd t} {t^4} . 
\end{align*}
By the change  $ 1/ \sqrt[3]{t} \ra t$, we obtain the bound  
\begin{align*}
	\frac {1} {N^2 \tau^{8/3}}	\!  \int_{1/\sqrt[3]{2 \tau}}^{1/\sqrt[3]{ \tau}}   \sum_{h \sim H} \!   \frac{c^3 q^4}{  \gamma}   \sumx_{b (\mathrm{mod} \, h)}  \!  \bigg| \!  \sum_{n \asymp N_{\natural} }   \!  A(n,d_2, d_1)   e \bigg(   \frac {b n} {h}  - 3 \sqrt[3]{\frac {  c q} {\gamma   }} \sqrt[3]{n} t \bigg)  \varvv_{\natural}  \bigg(\frac{n} {\gamma}, \frac {1} { c q t^3} \bigg) \bigg|^2 \!  {\nd t}   . 
\end{align*}
Note that by \eqref{5eq: c gamma}
\begin{align*}
	 \frac{c^3 q^4}{  \gamma} = \frac {d_1^2 d_2 q^4} {n_2 } \cdot \frac 1 {h}, \qquad \sqrt[3]{\frac {cq} {\gamma}} = \sqrt[3]{\frac {n_2 q} {d_2} } \cdot \frac 1 {h}. 
\end{align*}
Therefore  an application of   Lemma \ref{lem: Young's LS} with $\gamma = 1/3$,  $\tau \ra 1/ \sqrt[3]{\tau}$, $v =    \sqrt[3]{d_2} / 3 \sqrt[3]{ n_2 q} $, $C = O(H)$, and $N = O (N_{\natural})$ (similar to the application of Lemma  \ref{lem: LS} as in \S \ref{sec: large sieve, 1}, one may as well dismiss the $\log T$-inert weight $ \varvv_{\natural}  ( {n} / {\gamma},   1 / { c q t^3}  )$ here by the standard Mellin technique) yields the estimate
\begin{align*}
\breve{P}_{\natural} (d_1, d_2, q; H) \Lt 	\frac {T^{\vepsilon}} {N^2 \tau^{8/3} } \frac {d_1^2 d_2 q^4} {n_2 }  \bigg( \frac {H} {\tau^{1/3} } +  \bigg({\frac {N_{\natural}^2 d_2}  {n_2 q} }  \bigg)^{1/3}  \bigg)\sum_{n \asymp N_{\natural} }      |A(n,d_2, d_1)|^2. 
\end{align*}
By Lemma   \ref{lem: Ramanujan, 2}, this is further bounded by 
\begin{align*}
	\frac {T^{\vepsilon} N_{\natural} } {N^2 \tau^{8/3} } \frac {d_1^{2+9/11} d_2^{1+35/37} q^4} {n_2 }     \bigg( \frac {H} {\tau^{1/3} } +  \bigg({\frac {N_{\natural}^2 d_2}  {n_2 q} }  \bigg)^{1/3}  \bigg)  . 
\end{align*} 
By the definition in \eqref{5eq: Ns},  $$ N_{\natural} =   \frac {N^3 n_2^2 \tau^4}  {d_1^3 d_2^2 q^4} , $$
we obtain
\begin{align*}
\breve{P}_{\natural} (d_1, d_2, q; H) \Lt  \frac { T^{\vepsilon} NH \tau n_2 } { d_1^{2/11} d_2^{2/37}  } + \frac {T^{\vepsilon} N^3 \tau^4 n_2^2} {d_1^{24/11} d_2^{39/37} q^3 }   . 
\end{align*}
Finally, since $\tau \Lt M^{\vepsilon} / M$,  we arrive at the estimate in \eqref{5eq: P natural, bound}. 
\begin{appendices}
	
	\section{\texorpdfstring{Proof of Lemma \ref{lem: bound for A}: Individual Bound for {\protect\scalebox{1.06}{$A (n_1, n_2, n_3)$}}}{Proof of Lemma \ref{lem: bound for A}: Individual Bound for  A(n\unichar{"2081}, n\unichar{"2082}, n\unichar{"2083})}}
	\label{append}
	
	Actually, we shall prove here for $n_1 n_2 n_3 > 1$ that 
	 \begin{align} 
		|A(n_1, n_2, n_3) |<\tau^3(n_1)\tau^6(n_2)\tau^3(n_3)n_1^{\theta_4}n_2^{\theta_6}n_3^{\theta_4},
	\end{align}
which, by multiplicativity, is reduced for every prime $p$ to 
\begin{align} \label{app: A(p)}  
		|A(p^{\vnu_1},p^{\vnu_2},p^{\vnu_3})|   <(\vnu_1+1)^3(\vnu_2+1)^6(\vnu_3+1)^3p^{\theta_4\vnu_1+ \theta_6\vnu_2 +\theta_4 \vnu_3} . 
\end{align}
To this end, we invoke the Hecke relation in Lemma 3.3 in \cite{Chandee-Li-GL(4)-Special-Points}: 
\begin{align}\label{app: Hecke} 
	\begin{aligned}
	A(p^{\vnu_1}, p^{\vnu_2},   p^{\vnu_3})   =	A(p^{\vnu_1}, 1, 1)A(1, p^{\vnu_2}, p^{\vnu_3}) -   A(p^{\vnu_1-1}, 1, 1)A(1, p^{\vnu_2}, p^{\vnu_3-1})  &  \\  - A(p^{\vnu_1-1}, 1, 1)A(1, p^{\vnu_2-1}, p^{\vnu_3+1}) +   A(p^{\vnu_1-2}, 1, 1) A(1, p^{\vnu_2-1} , p^{\vnu_3})   &;
	\end{aligned}
\end{align}
it is understood that $ A (n_1, n_2, n_3) = 0 $ if one of $n_1$, $n_2$, $n_3$ is not integral. Moreover, keep in mind that $ A (n_3, n_2, n_1) = \overline{ A (n_1, n_2, n_3)} $. 

To start with, we have the bounds of Kim--Sarnak \cite[Appendix 2]{Kim-Sarnak} and Luo--Rudnick--Sarnak   \cite{Luo-Rudnick-Sarnak}\footnote{Kim \cite{Kim-Sarnak} proved that the exterior square $L$-function  $$L (s, \phi, \Lambda^2 ) = \zeta (2s) \sum_{n=1}^{\infty} \frac {A (1, n, 1)} {n^s} $$ is  the $L$-function of a $\GL_6$ automorphic representation. Note that $\zeta (2s)$ is missed in (3.3) in \cite{Chandee-Li-GL(4)-Special-Points}. }:
\begin{align} 
\label{app: Kim-Sarnak}	& 	|A(p^\vnu,1,1)|\leqslant  \binom{\vnu+3}{3}   p^{\theta_4 \vnu} \leqslant \frac{(\vnu+1)^3}{2}p^{\theta_4 \vnu}, \\ 
\label{app: LRS}	& 	|A(1,p^\vnu,1)|\leqslant \binom{\vnu+5}{5} p^{\theta_6 \vnu} + \binom{\vnu+3}{5} p^{\theta_6 (\vnu-2)}   <\frac{(\vnu+1)^5}{4} p^{\theta_6 \vnu}.  
\end{align}
It follows from \eqref{app: Hecke} that
 \begin{align*}
	A(p,p^{\vnu},1)=A(p,1,1)A(1,p^{\vnu},1)-A(1,p^{\vnu-1} , p).
\end{align*}
Thus 
$$|A (p, p, 1)| \leqslant 24 p^{\theta_4 + \theta_6} + 4 p^{\theta_4} < 32 p^{\theta_4 + \theta_6} ,   $$
and, by induction, it is easy to see that 
\begin{align}\label{app: A(p,pv,1)}
	|A(p,p^{\vnu},1)|<\frac{( \vnu + 1)^6}{2}p^{\theta_4+\theta_6 \vnu} ;
\end{align}
indeed, by \eqref{app: Kim-Sarnak}, \eqref{app: LRS}, and   induction hypothesis, it suffices to verify for $\vnu > 1$ that
\begin{align*}
	(\vnu+1)^5 + \frac {\vnu^6} 2 < \frac {(\vnu+1)^6} {2}. 
\end{align*}
Now, by  \eqref{app: Hecke} we have
\begin{align*} 
		A(p^{\vnu_1}, p^{\vnu_2},   1)   =	A(p^{\vnu_1}, 1, 1)A(1, p^{\vnu_2}, 1) & - A(p^{\vnu_1-1}, 1, 1)A(1, p^{\vnu_2-1}, p )   \\  & +   A(p^{\vnu_1-2}, 1, 1) A(1, p^{\vnu_2-1} , 1)     , 
\end{align*}
so it follows from  \eqref{app: Kim-Sarnak}, \eqref{app: LRS}, and \eqref{app: A(p,pv,1)} that 
\begin{align*}
	|A(p^{\vnu_1}, p^{\vnu_2},   1)| < \bigg(\frac{(\vnu_1+1)^3(\vnu_2+1)^5}{8}+\frac{\vnu_1^3\vnu_2^6}{4}+\frac{(\vnu_1-1)^3\vnu_2^5}{8}\bigg)p^{\theta_4 \vnu_1+\theta_6 \vnu_2} , 
\end{align*}
 hence 
\begin{align}\label{app: A(pv,pv,1)}
	|A(p^{\vnu_1}, p^{\vnu_2},   1)| < \frac{ (\vnu_1^3+3 \vnu_1) (\vnu_2+1)^6}{2}p^{\theta_4 \vnu_1+\theta_6 \vnu_2} . 
\end{align}
Finally, \eqref{app: A(p)}   is a direct consequence of \eqref{app: Hecke},   \eqref{app: Kim-Sarnak},  and \eqref{app: A(pv,pv,1)}. 

\end{appendices}
 

\def\cprime{$'$}

\end{document}